\newtheorem{thm}{Theorem}[section]
\newtheorem{prop}[thm]{Proposition}
\theoremstyle{definition}
\newtheorem{definition}[thm]{Definition}
\newtheorem{notation}[thm]{Notation}
\theoremstyle{remark}
\newtheorem{rem}[thm]{Remark}
\theoremstyle{question}
\newtheorem{thmx}{Theorem}
\numberwithin{equation}{section}
\begin{document}


\title{Interpolation of Power Mappings}


\author{Jack Burkart}
\address{Department of Mathematics, University of Wisconsin-Madison}
\email{burkart2@wisc.edu}


\author{Kirill Lazebnik}
\address{Department of Mathematics, University of North Texas}
\email{Kirill.Lazebnik@unt.edu}
\urladdr{https://math.unt.edu/~lazebnik/}



\keywords{Quasiconformal Mappings, Entire Functions, Complex Dynamics}
\subjclass[2010]{MSC 30D05, MSC 37F10, 30D30}

\begin{abstract} 




Let $(M_j)_{j=1}^\infty\in\mathbb{N}$ and $(r_j)_{j=1}^\infty\in\mathbb{R}^+$ be increasing sequences satisfying some mild rate of growth conditions. We prove that there is an entire function $f: \mathbb{C} \rightarrow\mathbb{C}$ whose behavior in the large annuli $\{ z\in\mathbb{C} : r_{j}\cdot\exp(\pi/M_{j})\leq|z|\leq r_{j+1}\}$ is given by a perturbed rescaling of $z\mapsto z^{M_j}$, such that the only singular values of $f$ are rescalings of $\pm r_j^{M_j}$. We describe several applications to the dynamics of entire functions.


\end{abstract}

\maketitle


\tableofcontents

\section{Introduction}
\label{introduction}

The problem of constructing analytic functions with prescribed geometry arises in areas across function theory. One general approach consists of defining a convergent infinite product, which has the advantage, for instance, of yielding an explicit expression for the zeros of the function. We mention several dynamical applications of this approach: the first construction of an entire function with a wandering domain \cite{Bak76}, constructions of entire functions of slow growth with Julia set = $\mathbb{C}$ \cite{MR1804945}, in the study of escaping sets of entire functions \cite{MR3095155}, the construction of transcendental entire functions with Julia sets of Hausdorff dimension $1$ \cite{Bis18}, and in the dynamics of sine maps \cite{MR4181721}.



A general principle used in the above works is that the behavior of an infinite product is dominated within certain regions by certain factors in the product. Our main result similarly proves the existence of an entire function whose behavior is dominated within certain annuli by certain prescribable monomials. One advantage of our approach, however, is a precise description of the singular values of the entire function. This information is in general difficult to glean from an infinite product, and is often crucial in applications. Before describing further motivation and applications, we state our main result. First we will need the following definition, which will serve as an assumption on the growth rate of the degree of the aforementioned monomials. 

\begin{definition}\label{permissible} Let $(M_j)_{j=1}^\infty \in \mathbb{N}$ be increasing, and $(r_j)_{j=1}^\infty \in \mathbb{R}^+$. We say that $(M_j)_{j=1}^\infty$, $(r_j)_{j=1}^\infty$ are \emph{permissible} if \begin{gather} r_{j+1} \geq  \exp\left(\pi\big/M_j\right) \cdot r_j \textrm{ for all } j\in\mathbb{N}\textrm{, } r_j\xrightarrow{j\rightarrow\infty}\infty\textrm{, and } \sup_j\frac{M_{j+1}}{M_j}<\infty.  \end{gather} 
\end{definition}

\noindent With Definition \ref{permissible}, we can state our main result:


\begin{thmx}\label{mainthm} Let  $(M_j)_{j=1}^\infty$, $(r_j)_{j=1}^\infty$ be permissible, $r_0:=0$ and $c\in\mathbb{C}^\star:=\mathbb{C}\setminus\{0\}$.  Set
\begin{equation}  c_1:=c\textrm{, and } c_j:=c_{j-1}\cdot r_{j-1}^{M_{j-1}-M_{j}} \textrm{ for } j\geq2.   \end{equation} 
Then there exists an entire function $f: \mathbb{C} \rightarrow\mathbb{C}$ and a homeomorphism $\phi: \mathbb{C} \rightarrow \mathbb{C}$ such that \begin{gather} f\circ\phi(z)=c_jz^{M_j} \textrm{ for } r_{j-1}\cdot\exp(\pi/M_{j-1}) \leq |z| \leq r_j\textrm{, } j\in\mathbb{N}.  \end{gather} Moreover, if $\sum_{j=1}^\infty M_j^{-1}<\infty$, then $|\phi(z)/z - 1|\rightarrow 0$ as $z\rightarrow\infty$. The only singular values of $f$ are the critical values $(\pm c_jr_j^{M_j})_{j=1}^\infty$.
\end{thmx}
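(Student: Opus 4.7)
The plan is the standard two-step procedure: build an explicit quasiregular model $F : \mathbb{C} \to \mathbb{C}$ that equals each power map $c_j z^{M_j}$ on the required annulus, then straighten the quasiconformal dilatation of $F$ via the Measurable Riemann Mapping Theorem to produce the entire function $f$. On each ``power annulus'' $B_j := \{r_{j-1}\exp(\pi/M_{j-1}) \leq |z| \leq r_j\}$ (interpreting $B_1 = \{|z| \leq r_1\}$ since $r_0 = 0$), define $F(z) := c_j z^{M_j}$; the inductive choice of $c_j$ is precisely what guarantees that on the inner circle $|z| = r_{j-1}$ both candidate formulas have modulus $\rho_{j-1} := c_{j-1} r_{j-1}^{M_{j-1}}$, so the image circles at the junctions match.

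The bulk of the work lies in defining the interpolation on each transition annulus $A_j := \{r_{j-1} \leq |z| \leq r_{j-1}\exp(\pi/M_{j-1})\}$ (for $j \geq 2$) between the degree-$M_{j-1}$ map $c_{j-1}z^{M_{j-1}}$ on its inner boundary and the degree-$M_j$ map $c_jz^{M_j}$ on its outer boundary, with the constraint that every critical value of the interpolation lies in $\{\pm \rho_{j-1}\}$. In logarithmic coordinates $A_j$ becomes a $2\pi$-periodic horizontal strip of width $\pi/M_{j-1}$; in a fundamental domain under rotation by $2\pi/M_j$ one builds a quasiregular ``fold'' map that covers the target annular region between $|w| = \rho_{j-1}$ and $|w| = \rho_{j-1}\exp(\pi M_j/M_{j-1})$ while introducing exactly $M_j - M_{j-1}$ new critical points, all of which map to $\pm \rho_{j-1}$. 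The hypothesis $\sup_j M_{j+1}/M_j < \infty$ is exactly what makes the fold geometry uniformly bounded, yielding a uniform bound $\|\mu_F\|_\infty \leq k < 1$ on the Beltrami coefficient $\mu_F := \bar\partial F / \partial F$, which is supported in $\bigcup_j A_j$. This careful fold construction — in particular, ensuring only two critical values per transition annulus and no spurious ones — is the principal technical obstacle.

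Once $F$ is built, the Measurable Riemann Mapping Theorem yields a normalized quasiconformal $\phi : \mathbb{C} \to \mathbb{C}$ solving $\bar\partial \phi = \mu_F \partial \phi$, and then $f := F \circ \phi^{-1}$ is entire and satisfies $f \circ \phi(z) = c_j z^{M_j}$ on each $B_j$. The critical values of $f$ coincide with those of $F$, giving exactly $\{\pm\rho_j\}_{j=1}^\infty$; since $\rho_j \to \infty$ these form a closed discrete subset of $\mathbb{C}$, and an inspection of the tracts of $F$ over any finite value rules out asymptotic values, so the singular set of $f$ is as claimed. For the asymptotic statement, when $\sum_j M_j^{-1} < \infty$ the moduli of the annuli $A_j$ supporting $\mu_F$ are summable, and standard integral estimates for normalized solutions of the Beltrami equation (via the Ahlfors--Bers representation) then yield $\phi(z)/z \to 1$ as $z \to \infty$.
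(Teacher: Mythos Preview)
Your outline is correct and follows essentially the same route as the paper: build a quasiregular model (the paper calls it $h$) equal to $c_j z^{M_j}$ on the power annuli and interpolated by a quasiconformal fold on the thin transition annuli, use the bound on $M_{j+1}/M_j$ to get a uniform dilatation bound, straighten via the Measurable Riemann Mapping Theorem, and read off the critical values from the fold. The only notable difference is that for the asymptotic $\phi(z)/z\to 1$ the paper invokes the Teichm\"uller--Wittich--Belinskii theorem (applied to $1/\phi(1/z)$ at the origin) rather than generic ``Ahlfors--Bers integral estimates''; the summability $\sum M_j^{-1}<\infty$ is precisely what makes the TWB integral $\int (D-1)|z|^{-2}\,\mathrm{d}A$ finite, and this is the cleaner tool here. (Also, the rotational symmetry of the fold in $A_j$ is by $2\pi/M_{j-1}$, not $2\pi/M_j$.)
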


\begin{rem}\label{Theorem_remark} The homeomorphism $\phi$ is a \emph{$K$-quasiconformal homeomorphism} (see Definition \ref{qc_mapping}), where $K$ depends only on $\sup_j(M_{j+1}/M_j)$. The conclusion $|\phi(z)/z-1|\rightarrow0$ is deduced from the Teichm\"uller-Wittich-Belinskii Theorem (see Theorem \ref{TWB}), and in many applications, we can even deduce uniform estimates $||\phi(z)/z-1||_{L^\infty(\mathbb{C})}<\varepsilon$ (see Section \ref{dynamical_application_section}). We further remark that a precise description of the critical points and zeros of $f$ are also given (see Proposition \ref{critical_point_listing} and Remark \ref{zero_listing}), up to the perturbation $\phi$. The ``scaling'' constants $(c_j)_{j=1}^\infty$ ensure that $z\mapsto c_jz^{M_j}$ and $z\mapsto c_{j+1}z^{M_{j+1}}$ both map $|z|=r_j$ to the same scale. Lastly, we comment that if we replace the condition $r_j\rightarrow\infty$ of Definition \ref{permissible} with the condition $r_j\rightarrow r_\infty<\infty$, we obtain a result similar to Theorem \ref{mainthm}, but with the domain of $f$ equal to $r_\infty\cdot\mathbb{D}$ rather than $\mathbb{C}$ (see Theorem \ref{mainthm2} in Section \ref{Entire_function_section}).


\end{rem}

As indicated in Remark \ref{Theorem_remark}, our methods rely on \emph{quasiconformal surgery}, a collection of techniques to which we refer  to \cite{FH09} for a survey. Among these techniques, there are at least two distinct approaches both termed \emph{quasiconformal surgery}. The first consists in  the construction of a quasiregular function $g$ possessing a  $g$-invariant Beltrami coefficient $\mu$. The integrating map for $\mu$ then conjugates $g$ to a holomorphic function. This approach appears first in \cite{MR819553} (to the best of the authors' knowledge), and is the most common use of the Measurable Riemann Mapping Theorem in complex dynamics, as it is inherently dynamical.

A different approach also termed \emph{quasiconformal surgery} consists of constructing a quasiregular function $g$ which in turn yields a holomorphic function $g\circ\phi^{-1}$, where $\phi$ is the integrating map for the Beltrami coefficient $g_{\overline{z}}/g_z$. This is the approach used in the present work, and has long found fundamental application in the type problem and value distribution theory (see, for instance, \cite{MR869798}, \cite{MR853888}), and, surprisingly, has found recent application in complex dynamics despite the lack of a conjugacy between $g$ and $g\circ\phi^{-1}$. Indeed, this approach was used in \cite{MR3316755} in settling a long-standing question about the existence of wandering domains for functions with bounded singular set (see also \cite{MR3339086}, \cite{MR4041106}, \cite{2019arXiv190410086L}, \cite{MR4008367}).

A general difficulty in this approach lies in proving that $\phi$ is only a small perturbation of the identity, which may be deduced, for instance, from showing that $g$ is holomorphic (and hence $g_{\overline{z}}/g_z=0$) except on a very small set. To this end, the work \cite{MR3316755} introduced a technique termed \emph{quasiconformal folding}, which has found many recent applications in complex dynamics and in function theory more broadly (see, for instance, \cite{MR3232011}, \cite{MR3420484}, \cite{2016arXiv161006278R}, \cite{MR3525384}, \cite{MR3701653}, \cite{MR3579902}, \cite{MR4023391}, \cite{MR4174037}). Our main methods of proof for Theorem \ref{mainthm} are influenced by this technique, but we emphasize that the present work does not rely directly on the results of \cite{MR3316755}, and indeed may be read and understood independently of the aforementioned works.


To conclude the Introduction, we remark on several applications of Theorem \ref{mainthm} and briefly describe the proof. In the present manuscript, we briefly present in Section \ref{dynamical_application_section} how Theorem \ref{mainthm} yields a robust approach to the  construction of entire functions with multiply connected wandering domains (first constructed in \cite{Bak76} - see also \cite{MR796748}, \cite{MR1278933}, \cite{MR2458806}, \cite{MR2900609}, \cite{MR2900166}, \cite{MR3149847}). In a companion manuscript \cite{BL_inprep}, we show how Theorem \ref{mainthm} gives a different approach to the result of \cite{Bis18} on existence of transcendental entire functions with Julia sets of Hausdorff dimension $1$ (see also \cite{MR4340830}). We also show in \cite{BL_inprep} how Theorem \ref{mainthm} yields an answer to Question 9.5 of \cite{MR3988603} on whether there can exist multiply connected wandering domains with infinite inner connectivity and uncountably many singleton complementary components.


The main step in the proof of Theorem \ref{mainthm} is in finding an efficient interpolation between the mapping $z\mapsto c_jz^{M_j}$ on $|z|=r_j$ and the mapping $z\mapsto c_{j+1}z^{M_{j+1}}$ on $|z| = r_j\cdot\exp(\pi/M_{j})$. Indeed, once this interpolation is found, one may define a quasiregular function $g: \mathbb{C}\rightarrow\mathbb{C}$ by $g(z):=c_jz^{M_j}$ in the large annuli $r_{j-1}\cdot\exp(\pi/M_{j-1}) \leq |z| \leq r_j$ and the above interpolation in the ``leftover'' thin annuli $r_{j}\leq |z| \leq r_j\cdot\exp(\pi/M_{j})$. The Measurable Riemann Mapping Theorem is then applied to the Beltrami coefficient $g_{\overline{z}}/g_z$ to produce a quasiconformal mapping $\phi$ such that $f:=g\circ\phi^{-1}$ is the entire function of Theorem \ref{mainthm}. We describe in detail the aforementioned interpolation in Section \ref{interpolation_section}.

We now briefly outline the paper. After collecting a couple preliminary results we will need in Section \ref{preliminaries}, we detail the specifics of the main interpolation in Section \ref{interpolation_section} where the primary contributions of the present work are contained. Section \ref{Entire_function_section} applies the results of Section \ref{interpolation_section} to build the entire function of Theorem \ref{mainthm}. In Section \ref{dynamical_application_section}, we consider dynamical applications of Theorem \ref{mainthm}.

{\subsection*{Acknowledgements}
\addtocontents{toc}{\protect\setcounter{tocdepth}{1}}

The authors would like to thank both anonymous referees for their suggestions which led to an improved version of the paper. }

\section{Preliminaries}
\label{preliminaries}

\begin{definition}\label{qc_mapping} An orientation-preserving homeomorphism $\phi: \mathbb{C}\rightarrow\mathbb{C}$ is said to be a \emph{quasiconformal mapping} if $\phi \in W^{1,2}_{\textrm{loc}}(\mathbb{C})$ and $||\phi_{\overline{z}}/\phi_z||_{L^\infty(\mathbb{C})} \leq k$ for some $0\leq k<1$.
\end{definition}

\begin{rem} We refer the reader to \cite{MR0344463} for a detailed study of quasiconformal mappings. We will assume a familiarity with the basic theory in what follows. We remark that a \emph{quasiregular} mapping is one which may be represented by a composition $f\circ\phi$, where $f$ is holomorphic, and $\phi$ is quasiconformal (see \cite{MR0344463}, Chapter VI). 
\end{rem}

\begin{notation} We abbreviate piecewise linear by PWL. Given a quasiregular map $f$, we will denote the dilatation constant of $f$ by $K(f)$, where $1\leq K(f)<\infty$. We denote $k(f):=(K(f)-1)/(K(f)+1)$, and occasionally we will use the notation $A(r_1, r_2):=\{ z \in \mathbb{C} : r_1 < |z| < r_2 \}$.
\end{notation}

We record a Theorem due to Teichm\"uller, Wittich, and Belinskii (Theorem \ref{TWB} below). As already mentioned in the Introduction, we will use this result to deduce the conclusion $|\phi(z)/z-1|\rightarrow0$ of Theorem \ref{mainthm}. The statement of the result is taken from Theorem 6.1 of \cite{MR0344463}, to which we refer for the relevant bibliography. We note that in Theorem \ref{TWB}, $\textrm{d}A(z)$ refers to area measure. Before stating the result, we recall two definitions which will appear in the Theorem.

\begin{definition}\label{dilatation_definition}  Let $\psi: \mathbb{C}\rightarrow\mathbb{C}$ be a quasiconformal mapping. The \emph{dilatation quotient} of $\psi$ is defined by \begin{equation} D(z):=\frac{|\psi_z(z)|+|\psi_{\overline{z}}(z)|}{|\psi_z(z)|-|\psi_{\overline{z}}(z)|}. \end{equation} The quantity $D(z)$ is defined for a.e. $z$ for a quasiconformal mapping $\psi$, and satisfies the relation $K(f)=\sup_{z\in\mathbb{C}} D(z)$ (see \cite{MR0344463}, Section IV.1.5).   
\end{definition}

\begin{definition} Let $\psi$ be a quasiconformal mapping defined in a neighborhood of a point $z_0$. The map $\psi$ is said to be \emph{conformal} at $z_0$ if the limit \begin{align} \lim_{z\rightarrow z_0}\frac{\psi(z)-\psi(z_0)}{z-z_0} \end{align} exists, in which case we denote the limit by $\psi_z(z_0)$. 
\end{definition}


\begin{thm}\label{TWB} Let $\psi$ be a $K$-quasiconformal mapping of the finite plane onto itself with $\psi(0)=0$ and \begin{equation} I(r):=\frac{1}{2\pi}\int_{|z|<r} \frac{D(z)-1}{|z|^2}\emph{d}A(z) < \infty \textrm{ for some } r < \infty. \end{equation} Then $\psi$ is conformal at $z=0$ and \begin{equation}\label{conclusion_of_TWB} \left| \frac{\psi(z)}{z} - \psi_z(0) \right| < |\psi_z(0)|\varepsilon(|z|),\hspace{3mm}   \varepsilon(|z|)\xrightarrow{|z|\rightarrow0}0, \end{equation} where the function $\varepsilon$ depends only on $K$ and $I$ and not otherwise on the mapping $\psi$.
\end{thm}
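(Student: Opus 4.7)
The plan is to deduce both the conformality at $0$ and the quantitative estimate \eqref{conclusion_of_TWB} from the classical modulus-distortion inequality for quasiconformal mappings, applied to a shrinking family of round annuli centered at $0$. The key analytic input is the Gr\"otzsch-type estimate, valid for $0 < r < R < \infty$,
\begin{equation*}
\left| \mathrm{mod}(\psi(A(r,R))) - \tfrac{1}{2\pi}\log(R/r) \right| \;\leq\; \tfrac{1}{2\pi}\bigl(I(R) - I(r)\bigr),
\end{equation*}
which one proves by computing the extremal length of the family of curves separating the two boundary components of $A(r,R)$ using the conformal metric $|dz|/|z|$ and its pullback through $\psi$ (with $D(z)$ measuring the pointwise distortion). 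Since $I(R) < \infty$ means $(D-1)/|z|^2$ is integrable on $\{|z|<R\}$, dominated convergence gives $I(r) \to 0$ as $r \to 0^+$.

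First I would fix a small $R > 0$ and observe that the above inequality controls $\mathrm{mod}(\psi(A(r,R)))$ to within $o(1)$ of $\log(R/r)/(2\pi)$ as $r \to 0^+$. Setting $M(s) := \max_{|z|=s}|\psi(z)|$ and $m(s) := \min_{|z|=s}|\psi(z)|$, the sandwich relation $A(M(r), m(R)) \subset \psi(A(r,R)) \subset A(m(r), M(R))$ together with monotonicity of modulus and the Gr\"otzsch-type estimate applied to both $\psi$ and $\psi^{-1}$ yields two-sided comparisons between $m(r), M(r), m(R), M(R)$ and $r, R$. Combined with a stability version of the extremal property of round annuli, this forces both $M(r)/r$ and $m(r)/r$ to converge to a common finite positive limit $|\alpha|$, with a quantitative rate controlled by $I$.

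To upgrade this to the full limit $\psi_z(0) = \lim_{z\to 0}\psi(z)/z$ (i.e., to pin down the argument as well as the modulus), I would apply the modulus estimate to slit sub-annular regions of the form $A(r,R)\setminus\Gamma$, where $\Gamma$ is a radial arc, and again invoke near-extremality of the image modulus to force $\psi$-images of radial segments to cluster asymptotically along true rays through $0$, with angular error controlled by $I$. Combining with the magnitude convergence of the previous step yields \eqref{conclusion_of_TWB}, with an error function $\varepsilon$ depending only on $K$ and on the function $I$.

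The main obstacle is the quantitative stability version of the extremal-annulus characterization needed in the last two steps: any doubly connected domain whose modulus is within $\delta$ of the maximal value attainable for a round annulus of given inner/outer radii must itself be close to that round annulus in a quantifiable sense. Ensuring that the resulting rate $\varepsilon(|z|)$ depends only on $K$ and $I$, and not otherwise on the particular map $\psi$, is then a matter of tracking absolute constants carefully through these stability estimates.
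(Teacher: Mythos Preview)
The paper does not prove Theorem~\ref{TWB}. It is quoted as a classical result (the Teichm\"uller--Wittich--Belinskii Theorem), with the statement taken verbatim from Theorem~6.1 of Lehto--Virtanen~\cite{MR0344463}, to which the authors explicitly refer for the proof and bibliography. So there is no ``paper's own proof'' to compare your proposal against.

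As for your sketch itself: the overall strategy---controlling the modulus of images of round annuli via the integral condition on $(D-1)/|z|^2$, deducing that $|\psi(z)|/|z|$ has a limit, and then upgrading to a full complex limit---is indeed the classical route, and is essentially what one finds in Lehto--Virtanen. Two cautions, though. First, the two-sided modulus bound you wrote down is not quite what one gets directly; the standard inequality is one-sided (modulus of the image is at least $\tfrac{1}{2\pi}\log(R/r) - \tfrac{1}{2\pi}(I(R)-I(r))$), and the reverse direction comes from applying the same bound to $\psi^{-1}$, which requires relating the dilatation integral of $\psi^{-1}$ back to that of $\psi$. Second, the step you flag as the ``main obstacle''---a quantitative stability version of the extremal round-annulus property, and the slit-annulus argument for the angular limit---is genuinely the crux, and your description of it is more of a wish than an argument. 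In the literature this step is handled either through careful estimates on the circular distortion function (as in Lehto--Virtanen) or via Belinskii's original approach; it is not a routine consequence of the modulus inequality alone. If you want to turn this into a self-contained proof, that is where the real work lies.
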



\section{Interpolation of Power Maps}
\label{interpolation_section}

This Section contains the primary technical contributions of the present work. We will describe in detail the interpolation procedure mentioned in the Introduction and prove the relevant estimates. We begin by describing the first step of the interpolation in logarithmic coordinates.

\begin{definition}\label{triangulation} We define a region \[W:=\{ z \in \mathbb{C} : 0<\textrm{Re}(z)<1 \} \setminus \{ z \in \mathbb{C} : 0\leq \textrm{Re}(z)\leq 1/2\textrm{ and }\textrm{Im}(z)\in2\mathbb{Z}+1 \}. \] Given $m\in\mathbb{N}$ with $m\geq2$, we also define a triangulation $T_{m}$ of $W$ as follows (see Figure \ref{fig: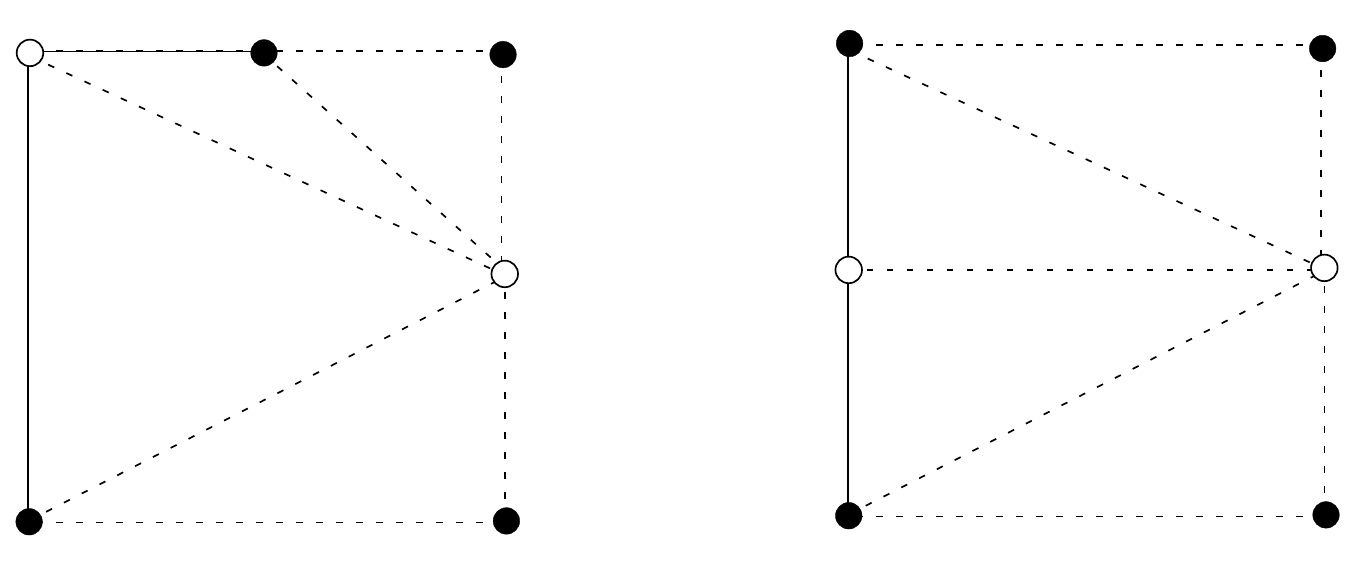_tex}). Place vertices at \begin{equation} 0\textrm{, } \left( 1+\frac{j}{m}\cdot i\right)_{j=0}^{m}\textrm{, and } \left( i + \frac{j}{2m-2}\right)_{j=0}^{m-1}.  \end{equation} Label the vertices black or white as follows: $0$ and $1$ are black, $i$ is white, and the other vertices are colored so that adjacent vertices on $\textrm{Re}(z)=1$ or $(\textrm{Im}(z)=1)\cap(\textrm{Re}(z)\leq1/2)$ have different colors. There is a triangulation of $W\cap[0,1]^2$ formed by connecting each vertex with real part $\leq1/2$ to $1+i\cdot(m-1)/m$. Iteratively reflecting this triangulation of $W\cap[0,1]^2$ along a subset of horizontal lines $\textrm{Im}(z)=k$, $k\in\mathbb{Z}$ defines the triangulation $T_m$.


\end{definition}

\begin{rem}\label{coloring} The coloring of the vertices in Definition \ref{triangulation} is not essential, however it is a useful convention. The colored vertices represent preimages of $\pm1$ under a map defined below in Definition \ref{quasiregular_interpolation}, with the two colors corresponding to the two choices $\pm1$.
\end{rem}

\begin{figure}
\centering
\scalebox{.5}{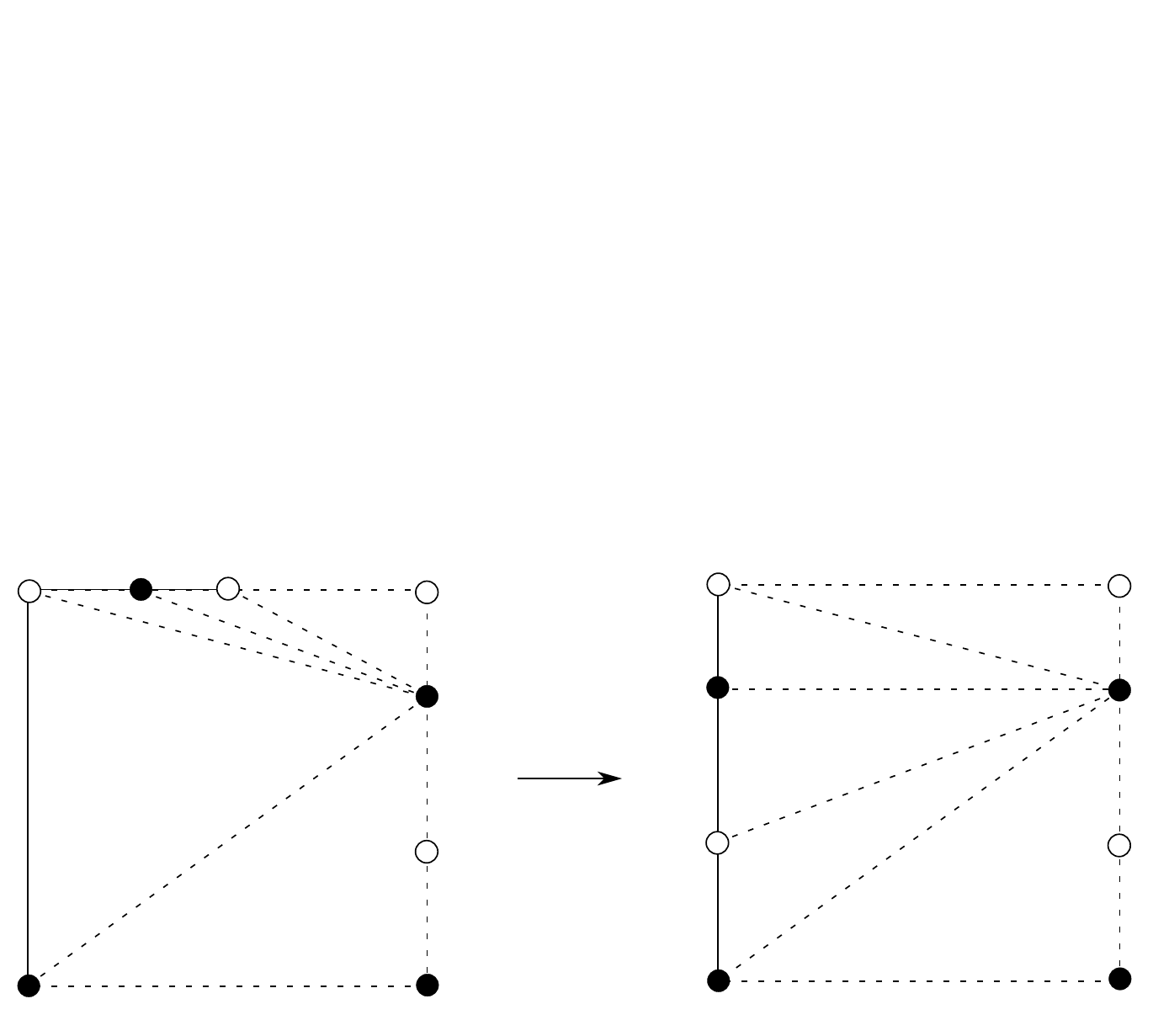}
\caption{ Illustrated is the definition of $\psi_m$ in $W\cap[0,1]^2$ in the cases $m=2$, $3$. }
\label{fig:square.pdf_tex}      
\end{figure}

\begin{definition}\label{triangulationT'} Given $m\in\mathbb{N}$, we define a triangulation $T_m'$ of $\{ z \in \mathbb{C} : 0<\textrm{Re}(z)<1 \}$ as follows (see Figure \ref{fig:square.pdf_tex}). First place vertices at \begin{equation} \left( \frac{j}{m}\cdot i\right)_{j=0}^{m} \textrm{ and } \left( 1+\frac{j}{m}\cdot i\right)_{j=0}^{m}. \end{equation} We color $0$ and $1$ black, and require adjacent vertices with the same real part to have different colors. There is a triangulation of $[0,1]^2$ defined by connecting each vertex on $\textrm{Re}(z)=0$ to $1+i\cdot(m-1)/m$. Iteratively reflecting this triangulation of $[0,1]^2$ along a subset of the horizontal lines $\textrm{Im}(z)=k$, $k\in\mathbb{Z}$ defines the triangulation $T_m'$.

\end{definition} 

\begin{rem} We remark that there is some flexibility in choosing the triangulations $T_m$, $T_m'$. For instance, one may instead define the triangulation $T_m'$ by connecting all of the vertices on $\textrm{Re}(z)=0$ to some other non-corner vertex on $\textrm{Re}(z)=1$ (instead of $1+i\cdot(m-1)/m$) without fundamentally altering the arguments that follow. 
\end{rem}

\noindent The triangulations $T_m$, $T_m'$ in Definitions \ref{triangulation} and \ref{triangulationT'} are \emph{compatible} in the following sense (see Section 4 of \cite{MR3421995}):

\begin{definition} Triangulations of two polygonal domains $\Omega_1$, $\Omega_2$ are compatible if we have a $1$-to-$1$
correspondence between the triangulations that preserves interior adjacencies (i.e., if two triangles share in edge in $\Omega_1$ then the corresponding triangles share an edge in $\Omega_2$).
\end{definition}


\begin{definition}\label{PWL_map_def} We define a PWL-homeomorphism \begin{equation} \psi_m: W \rightarrow \{ z \in \mathbb{C} : 0<\textrm{Re}(z)<1 \} \end{equation} as follows. We specify that \begin{align}\label{first_verter_definition} \psi_m(0):=0\textrm{ and } \psi_m( 1+\frac{j}{m}\cdot i ):=1+\frac{j}{m}\cdot i \textrm{ for } 0\leq j \leq m \textrm{, and } \\ \label{second_verter_definition} \psi_m\left(i + \frac{j}{2m-2}\right) :=  \frac{j+1}{m}\cdot i \textrm{ for } 0 \leq j \leq m-1. \end{align} As the triangulations $T_m\cap[0,1]^2$ and $T_m'\cap[0,1]^2$ are compatible, (\ref{first_verter_definition}) and (\ref{second_verter_definition}) uniquely determine a triangulation-preserving PWL-homeomorphism  \begin{equation} \psi_m: W\cap[0,1]^2 \rightarrow [0,1]^2 \end{equation} (see Section 4 of \cite{MR3421995}). We extend the map $\psi_m$ to a PWL-homeomorphism $\psi_m: W \rightarrow \{ z \in \mathbb{C} : 0<\textrm{Re}(z)<1 \}$ by repeated applications of the Schwarz-reflection principle (see Section I.8.4 of \cite{MR0344463} for a formulation of the Schwarz-reflection principle for quasiconformal mappings).



\end{definition}






\begin{prop}\label{psi_is_qc} For any $m\in\mathbb{N}$, the map \begin{equation} \psi_m: W \rightarrow \{ z \in \mathbb{C} : 0<\emph{Re}(z)<1 \} \end{equation} of Definition \ref{PWL_map_def} is quasiconformal. 
\end{prop}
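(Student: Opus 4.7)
The plan is to exploit the PWL structure of $\psi_m$ on the triangulation $T_m$, together with the fact that $W$ is generated from $W\cap[0,1]^2$ by iterated Schwarz reflection. Since $\psi_m$ is PWL and hence locally Lipschitz, it automatically belongs to $W^{1,2}_{\textrm{loc}}(W)$, so by Definition \ref{qc_mapping} it suffices to produce a constant $k<1$ with $||(\psi_m)_{\overline{z}}/(\psi_m)_z||_{L^\infty(W)} \leq k$.

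On each open triangle $T$ of $T_m$, the restriction $\psi_m|_T$ is the unique orientation-preserving real-affine homeomorphism carrying the three vertices of $T$ onto the three corresponding vertices of a triangle $T' \in T_m'$. Such an affine homeomorphism has a constant Beltrami coefficient whose modulus $k(T,T')$ depends only on the Euclidean similarity classes of $T$ and $T'$, and satisfies $k(T,T')<1$ whenever both $T$ and $T'$ are non-degenerate. Hence the proposition reduces to bounding $k(T,T')$ away from $1$ uniformly as $T$ ranges over $T_m$.

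Here the reflection construction does the work. By Definitions \ref{triangulation} and \ref{PWL_map_def}, every triangle in $T_m$ is obtained from a triangle of $T_m\cap[0,1]^2$ by some sequence of Euclidean reflections across horizontal lines $\textrm{Im}(z)=k$, $k\in\mathbb{Z}$, and the corresponding image triangle in $T_m'$ is obtained from the associated basic image triangle in $[0,1]^2$ by the same sequence of reflections. Because a Euclidean reflection is an isometry, and the modulus of the Beltrami coefficient of a real-linear map is invariant under pre- and post-composition with isometries, the supremum of $k(T,T')$ over all of $T_m$ equals the maximum over the finitely many basic triangles contained in $W\cap[0,1]^2$.

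The remaining step, which I expect to be the only part requiring explicit verification, is to confirm non-degeneracy of each basic triangle and of each basic image triangle. A direct inspection of the vertex placements in \eqref{first_verter_definition}--\eqref{second_verter_definition} and the fan structure described in Definition \ref{triangulation} (see Figure \ref{fig:square.pdf_tex}) shows that no two vertices of any basic triangle coincide and no three are collinear, so each basic triangle and its image have positive area. Consequently $k(T,T')<1$ for each of the finitely many basic pairs, and the maximum over this finite collection furnishes the required bound $k<1$.
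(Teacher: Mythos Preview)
Your proposal is correct and follows essentially the same approach as the paper's proof: reduce to the finitely many affine pieces on the fundamental block $W\cap[0,1]^2$, then invoke the Schwarz-reflection extension to conclude the dilatation is the same throughout $W$. Your write-up is somewhat more explicit---you spell out the $W^{1,2}_{\textrm{loc}}$ membership, the invariance of $|\mu|$ under isometric pre/post-composition, and the non-degeneracy check---but the underlying argument is identical to the paper's.
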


\begin{proof} The map \begin{equation}\label{restricted_map} \psi_m: W\cap[0,1]^2 \rightarrow [0,1]^2 \end{equation} is defined as a PWL map, and hence the dilatation constant of (\ref{restricted_map}) is the supremum of the dilatations of the $m+2$ $\mathbb{R}$-linear maps in the definition of (\ref{restricted_map}). Thus (\ref{restricted_map}) is quasiconformal. As the definition of $\psi_m$ in $W$ is obtained by the Schwarz-reflection principle, it follows that $\psi_m$ is also quasiconformal with the same constant.  \end{proof}


\begin{rem}  $K(\psi_m)\rightarrow\infty$ as $m\rightarrow\infty$. 
\end{rem}


\noindent The above essentially defines the first step in our interpolation in logarithmic coordinates. We now revert back to the $z$-plane (see also Figure \ref{fig: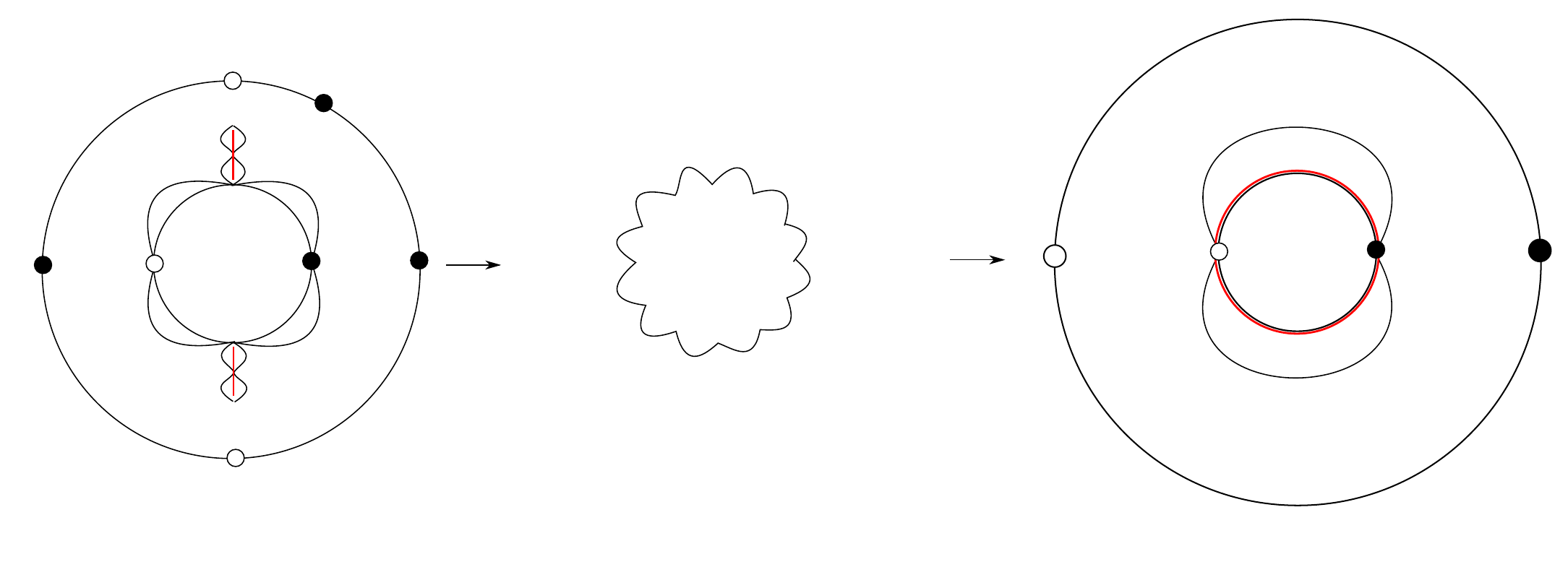_tex}).

\begin{definition}\label{E_n} We define a planar region \[ E_n:=\exp\left(\frac{\pi}{n} \cdot W  \right). \] Given $n$, $m\in\mathbb{N}$, we also define a map \begin{equation}\label{definition_of_eta}\eta_{n, m}(z) := \left(z\mapsto \exp\left(\frac{\pi}{n}\cdot z\right)\right)\circ\psi_{m}\circ \left(z\mapsto \frac{n}{\pi}\log z\right) \textrm{ for } z\in E_{n}.  \end{equation}
\end{definition}

\begin{prop}\label{eta_dilatation_bound} For any $n, m\in\mathbb{N}$, the map \begin{equation} \eta_{n,m}: E_n \rightarrow \left\{ z \in \mathbb{C} : 1 \leq |z| \leq \exp(\pi/n) \right\}\end{equation} is a quasiconformal homeomorphism. Moreover, $K(\eta_{n,m})$ depends only on $m$.
\end{prop}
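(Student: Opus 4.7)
The plan is to exploit the factorization
\[
\eta_{n,m} = \bigl(w \mapsto \exp(\pi w/n)\bigr) \circ \psi_m \circ \bigl(z \mapsto (n/\pi)\log z\bigr),
\]
in which the outer and inner maps are holomorphic (hence $1$-quasiconformal on appropriate branches) and the middle map $\psi_m$ is quasiconformal by Proposition \ref{psi_is_qc}. Since pre- and post-composition with conformal maps leaves the dilatation unchanged, both the quasiconformality of $\eta_{n,m}$ and the bound $K(\eta_{n,m}) = K(\psi_m)$ should follow at once; moreover the right-hand side is independent of $n$ because, as noted in the proof of Proposition \ref{psi_is_qc}, $K(\psi_m)$ equals the maximum of the dilatations of the $m+2$ $\mathbb{R}$-linear pieces defining $\psi_m$ on $W\cap[0,1]^2$, a quantity depending only on $m$.

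First I would verify that the composition is single-valued on $E_n$. A choice of branch of $\log$ at any $z \in E_n$ is only defined up to an element of $2\pi i \mathbb{Z}$, so $(n/\pi)\log z$ is only defined up to $2in\mathbb{Z}$. The Schwarz-reflection extension in the construction of $\psi_m$ is arranged so that $\psi_m(w + 2i) = \psi_m(w) + 2i$, which implies $\psi_m(w + 2ink) - \psi_m(w) \in 2in\mathbb{Z}$ for all integers $k$; post-composing with $\exp(\pi \cdot /n)$ then absorbs this indeterminacy, since $\exp(2\pi i k) = 1$. I anticipate that this branch-and-reflection bookkeeping --- in particular, tracking how the radial slits comprising $\partial E_n \setminus \{|z|=1, \exp(\pi/n)\}$ correspond under $L(z) := (n/\pi)\log z$ to identified pairs of slit boundary arcs in $W$ --- will be the main technical obstacle.

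Next I would verify that $\eta_{n,m}$ is a homeomorphism onto the closed annulus $\{1 \leq |z| \leq \exp(\pi/n)\}$. The vertical sides $\{\mathrm{Re}(w) = 0\}$ and $\{\mathrm{Re}(w) = 1\}$ of $W$ are sent by $\psi_m$ to $\{\mathrm{Im}(w) = 0\}$ and $\{\mathrm{Im}(w) = 1\}$ respectively, and exponentiation then wraps these onto the inner and outer boundary circles. Injectivity in the interior follows from injectivity of $\psi_m$ on a fundamental vertical strip of $W$ together with the $2\pi i$-periodicity of $\exp$.

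Finally, the dilatation estimate is immediate from the chain rule for Beltrami coefficients: since both the inner and outer factors are holomorphic, their Beltrami coefficients vanish, and one obtains $|\mu_{\eta_{n,m}}(z)| = |\mu_{\psi_m}(L(z))|$ for a.e.\ $z \in E_n$. Taking essential suprema yields $K(\eta_{n,m}) = K(\psi_m)$, which depends only on $m$ as explained above. The substantive work lies in the well-definedness and homeomorphism steps; the dilatation identity itself is a one-line consequence of composing with conformal factors.
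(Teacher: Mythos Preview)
Your approach is essentially identical to the paper's, which simply cites the factorization (\ref{definition_of_eta}) and Proposition \ref{psi_is_qc}; you have filled in the well-definedness and periodicity details that the paper leaves implicit. One small slip: you write that $\psi_m$ sends the vertical sides $\{\mathrm{Re}(w)=0,1\}$ of $W$ to the horizontal lines $\{\mathrm{Im}(w)=0,1\}$, but from Definition \ref{PWL_map_def} (e.g.\ $\psi_m(0)=0$ and $\psi_m|_{\mathrm{Re}(w)=1}=\mathrm{id}$) the target strip is $\{0<\mathrm{Re}(w)<1\}$ and the vertical boundary sides are preserved as such --- this is what makes the exponentiated image the annulus $\{1\le |z|\le \exp(\pi/n)\}$ rather than some rotated region. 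The paper itself contains a typo here (writing $\mathrm{Im}$ for $\mathrm{Re}$), which may have misled you; in any case the slip does not affect your dilatation argument.
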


\begin{proof} This follows from (\ref{definition_of_eta}) and Proposition \ref{psi_is_qc}. \end{proof}



A candidate for an interpolation between $z\mapsto z^n$ on $|z|=1$ and $z\mapsto z^{mn}$ on $|z|=\exp(\pi/n)$ is the mapping $(z\mapsto z^{nm})\circ \eta_{n,m}$. Indeed, as we will shortly see, the map $(z\mapsto z^{nm})\circ \eta_{n,m}$ has the correct values on $|z|=1$ and $|z|=\exp(\pi/n)$. However, $(z\mapsto z^{nm})\circ \eta_{n,m}$ does not extend to a single-valued function across the radial arcs on the boundary of $E_n$. We will remedy this in Definition \ref{quasiregular_interpolation} below, for which we will first need the following:

\begin{definition}\label{sigma_definition} Following \cite{MR4023391}, we define a $3$-quasiconformal map \begin{equation}\sigma: \{ z \in \mathbb{C} : |z| > 1 \} \rightarrow \mathbb{C}\setminus[-1,1]\end{equation} as follows (see Figure \ref{fig:sigma_def}). Denote by $\mu(z):=(z+1)/(z-1)$ the M\"obius transformation mapping $|z|>1$ conformally to the right-half plane. Let $\nu$ denote the $3$-quasiconformal map sending the right-half plane to $\mathbb{C}\setminus(-\infty,0]$ that is the identity on $|\arg(z)|\leq \pi/4$ and triples angles in the remaining sector. Then \begin{equation} \sigma:=\mu\circ\nu\circ\mu. \end{equation}
\end{definition}

\begin{figure}[ht!]
{\includegraphics[width=1\textwidth]{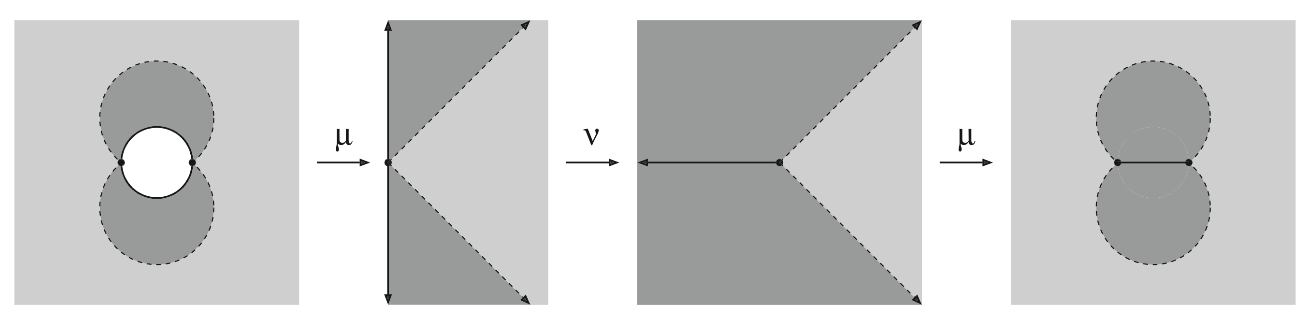}}
\caption{Illustrated is the map $\sigma$ of Definition \ref{sigma_definition}. This Figure is taken from \cite{MR4023391}.}
\label{fig:sigma_def}
\end{figure}

\begin{rem}\label{r_remark} We will denote $X:=\textrm{supp}(\sigma_{\overline{z}}/\sigma_{z})$ (illustrated as the dark gray region in the left-most copy of $\mathbb{C}$ in Figure \ref{fig:sigma_def}). Note that $\sigma$ is the identity on $ \{ z \in \mathbb{C} : |z| > 1 \}\setminus X$. 
\end{rem}

\begin{definition}\label{quasiregular_interpolation} Given $n$, $m\in\mathbb{N}$, we define a quasiregular map $g_{n,m}$ on the region $E_{n}$ as follows. Consider first the map \begin{equation}\label{interested_in_preimage} (z\mapsto z^{mn}) \circ \eta_{n,m}: E_{n} \rightarrow \left\{ z \in \mathbb{C} : 1 \leq |z| \leq \exp(m\pi) \right\}.\end{equation} Note that \[X\subset \left\{ z \in \mathbb{C} : 1 \leq |z| \leq \exp(m\pi) \right\} \textrm{ for any } m\in\mathbb{N}.\] There is a partition of the boundary of $E_{n}$ into the two circular arcs $|z|=1$, $|z|=\exp(\pi/n)$ and $n$ radial arcs perpendicular to $|z|=1$ (see Figure \ref{fig:adjustment_of_defn.pdf_tex} for the case $n=2$ where the radial arcs are colored red). The preimage of $X$ under (\ref{interested_in_preimage}) consists of $2mn$ components: let $U$ denote the union of those components which neighbor one of the $n$ radial arcs on the boundary of $E_n$. We define \begin{equation}\label{definition_of_g} g_{n,m}(z):= \begin{cases} 
      \sigma \circ (z\mapsto z^{mn}) \circ \eta_{n,m}(z) & z\in U \\
      (z\mapsto z^{mn}) \circ \eta_{n,m}(z) & z\in E_{n}\setminus U.
   \end{cases} \end{equation}
\end{definition}

\begin{figure}
\centering
\scalebox{.375}{
\begingroup%
  \makeatletter%
  \providecommand\color[2][]{%
    \errmessage{(Inkscape) Color is used for the text in Inkscape, but the package 'color.sty' is not loaded}%
    \renewcommand\color[2][]{}%
  }%
  \providecommand\transparent[1]{%
    \errmessage{(Inkscape) Transparency is used (non-zero) for the text in Inkscape, but the package 'transparent.sty' is not loaded}%
    \renewcommand\transparent[1]{}%
  }%
  \providecommand\rotatebox[2]{#2}%
  \newcommand*\fsize{\dimexpr\f@size pt\relax}%
  \newcommand*\lineheight[1]{\fontsize{\fsize}{#1\fsize}\selectfont}%
  \ifx\svgwidth\undefined%
    \setlength{\unitlength}{1040.79126553bp}%
    \ifx\svgscale\undefined%
      \relax%
    \else%
      \setlength{\unitlength}{\unitlength * \real{\svgscale}}%
    \fi%
  \else%
    \setlength{\unitlength}{\svgwidth}%
  \fi%
  \global\let\svgwidth\undefined%
  \global\let\svgscale\undefined%
  \makeatother%
  \begin{picture}(1,0.36188181)%
    \lineheight{1}%
    \setlength\tabcolsep{0pt}%
    \put(0,0){\includegraphics[width=\unitlength,page=1]{adjustment_of_defn.pdf}}%
    \put(0.28339587,0.21975778){\color[rgb]{0,0,0}\makebox(0,0)[lt]{\lineheight{1.25}\smash{\begin{tabular}[t]{l}\scalebox{2.5}{$\eta_{2,3}$}\end{tabular}}}}%
    \put(0,0){\includegraphics[width=\unitlength,page=2]{adjustment_of_defn.pdf}}%
    \put(0.5858391,0.21858806){\color[rgb]{0,0,0}\makebox(0,0)[lt]{\lineheight{1.25}\smash{\begin{tabular}[t]{l}\scalebox{2.5}{$z\mapsto z^6$}\end{tabular}}}}%
    \put(0.12740931,0.02436402){\color[rgb]{0,0,0}\makebox(0,0)[lt]{\lineheight{1.25}\smash{\begin{tabular}[t]{l}\scalebox{2.5}{(A)}\end{tabular}}}}%
    \put(0.44315197,0.02566882){\color[rgb]{0,0,0}\makebox(0,0)[lt]{\lineheight{1.25}\smash{\begin{tabular}[t]{l}\scalebox{2.5}{(B)}\end{tabular}}}}%
    \put(0.81038961,0.01140468){\color[rgb]{0,0,0}\makebox(0,0)[lt]{\lineheight{1.25}\smash{\begin{tabular}[t]{l}\scalebox{2.5}{(C)}\end{tabular}}}}%
  \end{picture}%
\endgroup%
}
\caption{The two dark-gray regions in (C) represent the set $X$ of Remark \ref{r_remark}, and the dark-gray regions in (B) and (A) represent the pullback of $X$ under the maps $z\mapsto z^6$ and $\eta_{2,3}\circ(z\mapsto z^6)$, respectively. The definition of $g_{2,3}$ differs from that of $(z\mapsto z^6)\circ\eta_{2,3}$ only in the four dark-gray regions in (A) neighboring the radial arcs colored red. The map $(z\mapsto z^6)\circ\eta_{2,3}$ is $2$-valued on these arcs, whereas $g_{2,3}$ is single-valued and indeed continuous across these arcs.}
\label{fig:adjustment_of_defn.pdf_tex}      
\end{figure}

\begin{rem} Note that the two formulas in (\ref{definition_of_g}) agree on $\partial U$ as $\sigma(z)=z$ for $z\in\partial X$.
\end{rem}

\begin{prop}\label{unit_interpolation_lemma} For any $n$, $m\in\mathbb{N}$, The map $g_{n,m}$ of Definition \ref{quasiregular_interpolation} is quasiregular on $1 \leq |z| \leq \exp(\pi/n)$. Moreover, $K(g_{n,m})$ depends only on $m$, and: \begin{enumerate} \item $g_{n,m}(z)=z^n$ if $|z|=1$, and \item $g_{n,m}(z)=z^{mn}$ if $|z|=\exp(\pi/n)$. \end{enumerate}
\end{prop}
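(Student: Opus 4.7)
The proof has three parts: (a) $g_{n,m}$ extends continuously across the $n$ radial arcs of $\partial E_n$ interior to the solid annulus $\{1\leq |z|\leq \exp(\pi/n)\}$; (b) $g_{n,m}$ takes the stated boundary values on $|z|=1$ and $|z|=\exp(\pi/n)$; (c) $K(g_{n,m})$ depends only on $m$. For (a), the map $(z\mapsto z^{mn})\circ\eta_{n,m}$ has two distinct one-sided limits on the two sides of each radial arc (these correspond to the two sides of the associated slit in $W$, unfolded in different directions by $\psi_m$), and both limits lie on $\partial X$. The set $U$ consists of exactly the $2n$ components of $\eta_{n,m}^{-1}\circ(z\mapsto z^{mn})^{-1}(X)$ flanking the radial arcs; post-composing by $\sigma$ on $U$ identifies conjugate points of $\partial X$ to single values in $[-1,1]$, producing a continuous single-valued map across each arc. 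Outside $U$ the formula is unchanged, and on $\partial U$ consistency holds because $\sigma|_{\partial X}=\textrm{id}$ (as noted in Remark \ref{r_remark}).

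For (b), introduce the logarithmic coordinate $w:=(n/\pi)\log z$. On $|z|=1$ we have $\textrm{Re}(w)=0$, and the specification $\psi_m(0)=0$ together with the compatibility of $T_m$ and $T_m'$ (propagated by iterated Schwarz reflection) forces $\psi_m$ to act as $w\mapsto w/m$ on each connected sub-arc of the imaginary axis of $W$ between consecutive slit endpoints. Substituting yields $\eta_{n,m}(z)=z^{1/m}$ (with a branch consistent on each of the $n$ arc components of $|z|=1$ in $E_n$), and therefore $g_{n,m}(z)=z^n$ on $|z|=1$. On $|z|=\exp(\pi/n)$ we have $\textrm{Re}(w)=1$; the specifications $\psi_m(1+ji/m)=1+ji/m$ together with Schwarz reflection force $\psi_m$ to equal the identity on the line $\textrm{Re}(w)=1$, so $\eta_{n,m}(z)=z$ there and $g_{n,m}(z)=z^{mn}$. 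In both cases the circles lie in $E_n\setminus U$, since $X$ meets $|z|=1$ only at $\pm 1$ and lies strictly inside the target annulus $\{1\leq|z|\leq\exp(m\pi)\}$, so the unmodified formula for $g_{n,m}$ applies.

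For (c), the chain rule for quasiregular maps yields $K(g_{n,m})\leq K(\sigma)\cdot K(z\mapsto z^{mn})\cdot K(\eta_{n,m})=3\cdot 1\cdot K(\eta_{n,m})$, since $z\mapsto z^{mn}$ is holomorphic and $K(\sigma)=3$; the right-hand side depends only on $m$ by Proposition \ref{eta_dilatation_bound}. The principal obstacle is step (a): one must carefully pair the $2n$ components of $U$ with the $2n$ sides of the $n$ radial arcs and verify that $\sigma$ identifies the correct conjugate points of $\partial X$. The black/white coloring convention of Definition \ref{triangulation}, which tracks preimages of $\pm 1$ under the $mn$-th power map, is introduced precisely to make this combinatorial bookkeeping transparent.
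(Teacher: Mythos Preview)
Your overall strategy matches the paper's: continuity across the radial slits via the conjugate-identifying property of $\sigma$, direct verification of the boundary values, and the dilatation bound via Proposition~\ref{eta_dilatation_bound}. Parts (a), (c), and the outer-circle case of (b) are fine and agree with the paper's argument.

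Your verification of the inner boundary condition (1), however, contains a real error. The claim that Schwarz reflection forces $\psi_m$ to act as $w\mapsto w/m$ on each sub-arc of the imaginary axis between consecutive slit endpoints is false. That formula holds only on the central arc $[-i,i]$; on the arc centered at $2ki$ one gets instead $\psi_m(iy)=2ki+i(y-2k)/m$, i.e.\ $\psi_m$ fixes $2ki$ and contracts by $1/m$ about it. Consequently, on the $j$-th arc of $|z|=1$ one has $\eta_{n,m}(e^{i\theta})=\exp\!\big(i\,[\,2\pi j/n+(\theta-2\pi j/n)/m\,]\big)$, which is \emph{not} a branch of $z^{1/m}$ unless $n\mid(m-1)$. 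The conclusion $g_{n,m}(z)=z^n$ still holds, because after applying $z\mapsto z^{mn}$ the offset contributes the factor $\exp(2\pi i j(m-1))=1$; but your stated reasoning does not establish it. (The paper itself leaves this computation to the reader, so you were right to attempt it --- the fix is simply to carry the affine shift through.) A secondary inaccuracy: ``$X$ meets $|z|=1$ only at $\pm1$'' is not true --- in fact $\mathbb{T}\subset\overline{X}$, since $\mu$ sends the whole imaginary axis to $\mathbb{T}$. The correct reason $|z|=1$ avoids the $\sigma$-modification is that the components of $\big((z\mapsto z^{mn})\circ\eta_{n,m}\big)^{-1}(X)$ meeting $|z|=1$ away from the slit endpoints are not among those selected for $U$.
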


\begin{proof} It is evident from Definition \ref{quasiregular_interpolation} that $g_{n,m}$ is quasiregular in $E_n$ as it is a composition of quasiregular maps in $E_n$. Thus to show that $g_{n,m}$ is quasiregular on $1 \leq |z| \leq \exp(\pi/n)$, it suffices (by a standard removability result) to show that $g_{n,m}$ extends continuously across the radial arcs on the boundary of $E_n$. For $z$ on such a radial arc, $(z\mapsto z^{mn}) \circ \eta_{n,m}(z)$ is $2$-valued with both values lying at complex-conjugate points on $\mathbb{T}$, whence $\sigma$ identifies these two points. Thus $g_{n,m}$ extends continuously across the radial arcs on the boundary of $E_n$. That $K(g_{n,m})$ depends only on $m$ follows from the Formula (\ref{definition_of_g}) and Proposition \ref{eta_dilatation_bound}.

It remains to show (1) and (2). (2) is evident since $|z|=\exp(\pi/n)$ is disjoint from $U$, and $\eta_{n,m}(z)=z$ for $|z|= \exp(\pi/n)$ (see Definition \ref{PWL_map_def} of $\psi_m$). We now verify (1). Consider the arc \[\mathcal{A}_n:=\{z : |z| = 1, 0\leq\textrm{arg}(z)\leq \pi/n\}.\] Since $\psi_m(0)=0$ and $\psi_m(i)=i/m$, it follows from (\ref{definition_of_eta}) that $\eta_{n,m}(\mathcal{A}_n)=\mathcal{A}_{mn}$. Thus $g_{n,m}(\mathcal{A}_n)=\mathcal{A}_1$ since $U$ is disjoint from $\mathcal{A}_n$. Thus $z\mapsto g_{n,m}(z)$ and $z\mapsto z^{n}$ agree set-wise on $\mathcal{A}_n$ and on the endpoints of $\mathcal{A}_n$. Since $z\mapsto z^n$ has derivative of constant modulus on $|z|=1$, we will be able to conclude that  $g_{n,m}(z)=z^n$ on $\mathcal{A}_n$ once we observe that $z\mapsto g_{n,m}(z)$ also has derivative of constant modulus on $|z|=1$. Indeed, it follows from the chain rule and formula (\ref{definition_of_eta}) that $\eta_{n,m}$ has derivative of constant modulus on $\mathcal{A}_n$, whence so does $g_{n,m}$ since $g_{n,m}(z)=(z\mapsto z^{mn}) \circ \eta_{n,m}(z)$ on $\mathcal{A}_n$. The argument that $z\mapsto g_{n,m}(z)$ and $z\mapsto z^n$ agree on the other subarcs \[ \left\{z : |z| = 1, \frac{(k-1)\pi}{n}\leq\textrm{arg}(z)\leq \frac{k\pi}{n}\right\}, \hspace{3mm} 1 \leq k \leq n+1 \] of $|z|=1$ is similar. \end{proof}


In order to understand the singular values of the function $f$ of Theorem \ref{mainthm}, we will need to keep track of those points at which our interpolating function is locally $n:1$ for $n>1$. To this end, we introduce the following definition:

\begin{definition} Let $g$ be a quasiregular function, defined in a neighborhood of a point $z\in\mathbb{C}$. We say that $z$ is a \emph{branched point} of $g$ if for any neighborhood $U$ of $z$, the map $g|_U$ is $n:1$ onto its image for $n>1$. If, further, $n=2$, we say that $z$ is a \emph{simple} branched point. We say $w\in\mathbb{C}$ is a \emph{branched value} of $g$ if $w=g(z)$ for a branched point $z$ of $g$. 
\end{definition}

\begin{prop}\label{simplest_critical_points_location} Let $g$ be as in Definition \ref{quasiregular_interpolation}. Define $g(z):=z^n$ for $|z|<1$, and $g(z):=z^{mn}$ for $|z|>\exp(\pi/n)$. Then the only non-zero branched points of $g$ are: \begin{equation}\label{crit_pts} \exp\left(  \frac{\pi}{n}\cdot\frac{ l}{(2m-2)}+i\cdot\frac{\pi(2k-1)}{n} \right)\hspace{1mm}\textrm{   for   } \hspace{1mm}0\leq l \leq m-2\textrm{, and } 1\leq k \leq n. \end{equation} Moreover, all non-zero branched points of $g$ are simple, and the only non-zero branched values of $g$ are $\pm1$. 
\end{prop}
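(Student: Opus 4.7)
The plan is to split $\mathbb{C}$ into the three regions $|z|<1$, $E_n$, and $|z|>\exp(\pi/n)$, and check each for branched points. In $|z|<1$ we have $g=z^n$, locally injective except at $z=0$; in $|z|>\exp(\pi/n)$ we have $g=z^{mn}$, locally injective everywhere (since $z=0$ lies in the first region). In the interior of $E_n$, $g_{n,m}$ is a composition of the homeomorphism $\eta_{n,m}$ (Proposition \ref{eta_dilatation_bound}), the power $z\mapsto z^{mn}$ (which is locally injective on $\eta_{n,m}(E_n)\subset A(1,\exp(\pi/n))$, as this set avoids $0$), and possibly the homeomorphism $\sigma$; hence there are no non-zero branched points in the interior of any of the three regions. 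What remains is the boundary of $E_n$: the two circles $|z|=1,\exp(\pi/n)$ and the $n$ radial arcs at angles $(2k-1)\pi/n$.

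Away from the endpoints of the radial arcs, $g$ matches a power map on both sides of each circle (Proposition \ref{unit_interpolation_lemma}) and glues continuously across each arc via the $\sigma$-correction, so a comparison of local degrees shows $g$ is a local homeomorphism there. The remaining candidates are concentrated on the radial arcs and their endpoints. To handle these, pass to log coordinates $w=(n/\pi)\log z$: $E_n$ becomes the slit-strip $W$, each radial arc becomes a slit at $\textrm{Im}(w)=2k-1$, $\textrm{Re}(w)\in[0,1/2]$, and $\eta_{n,m}$ becomes $\psi_m$. The $T_m$-vertices on the slit at $\textrm{Im}(w)=2k-1$ are $w_j^{(k)}:=i(2k-1)+j/(2m-2)$, $j=0,\ldots,m-1$, and each is sent by $\psi_m$ to a vertex of $T_m'$ on $\textrm{Re}(w)=0$. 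The key assertion is that at each vertex with $0\leq j\leq m-2$, the two triangles of $T_m$ meeting $w_j^{(k)}$ from opposite sides of the slit are mapped by $\psi_m$ onto two triangles of $T_m'$ sharing the image vertex; composing with $z\mapsto z^{mn}$ and with $\sigma$ (which zips together the two-valued branch cut produced across the slit) then yields a simple fold of local degree $2$ at $w_j^{(k)}$. The endpoint $w_{m-1}^{(k)}=i(2k-1)+1/2$ is an exception: the slit ends there in $W$, the triangulation extends across the endpoint by Schwarz reflection, and $g$ is a local homeomorphism, so no branching occurs.

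The main obstacle is this local fold computation at each interior slit vertex: one must describe $T_m$ and $T_m'$ near $w_j^{(k)}$ and $\psi_m(w_j^{(k)})$ explicitly and verify that the combined map is indeed a $2{:}1$ branched cover onto a neighborhood of its image. Once this is done, the list (\ref{crit_pts}) of $n(m-1)$ simple branched points follows by enumeration over the $n$ slits and $m-1$ interior vertices on each. Finally, the branched values are identified by direct computation: using $\psi_m(i+j/(2m-2))=(j+1)i/m$ for $j\geq 1$ (and $\psi_m(i)=i/m$, as forced by continuity and the triangulation data) together with $\sigma(\pm 1)=\pm 1$, one finds $g(w_j^{(k)})=(-1)^{j+1}$, which is always $\pm 1$; this is consistent with the coloring convention of Remark \ref{coloring} identifying black and white vertices with preimages of $+1$ and $-1$ respectively.
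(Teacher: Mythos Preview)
Your proposal is correct and takes the same direct-verification approach as the paper's (very brief) proof, supplying much more of the actual verification. One small slip worth noting: the two triangles of $T_m$ meeting $w_j^{(k)}$ from opposite sides of the slit are \emph{not} mapped by $\psi_m$ to triangles of $T_m'$ sharing a common vertex---for $k=1$ they land at the reflected positions $(j+1)i/m$ and $2i-(j+1)i/m$---and the identification of the two values occurs only after composing with $z\mapsto z^{mn}$ (both images go to $(-1)^{j+1}$) and then $\sigma$; this does not affect your argument or its conclusion.
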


\begin{rem} The expression (\ref{crit_pts}) is simply a formula for the black and white vertices located on the radial arcs colored red in, for instance, Figure \ref{fig:adjustment_of_defn.pdf_tex}(A), but excluding those vertices at the outer-most tips of the radial arcs. 
\end{rem}

\begin{proof} We first note that the extended formula defines a quasiregular function on $\mathbb{C}$ by removability of analytic arcs for quasiregular mappings. It is readily verified then from the definition of $g$ that any neighborhood of each of the points (\ref{crit_pts}) is mapped $2:1$ onto its image by $g$. The points (\ref{crit_pts}) are sent to $\pm1$ by $g$, where we note that the sign may be determined by the coloring of the vertex as described in Remark \ref{coloring}. Lastly, again from the definition of $g$, one verifies directly that there are no remaining non-zero branched points of $g$. \end{proof}


 We will also need to record the zeros of our interpolating function for later application. The formulas are listed below in Proposition \ref{location_of_zeros}, but they are readily seen to just be the midpoint between each adjacent black/white vertex on a radial segment pictured in, for instance, Figure \ref{fig:adjustment_of_defn.pdf_tex}(A).

\begin{prop}\label{location_of_zeros} Let $g$ be as in Proposition \ref{simplest_critical_points_location}. Then the zeros of $g$ are given by: \begin{align}\label{listing_of_zeros} 0\textrm{ and }\exp \left( \frac{\pi}{n} \cdot \frac{l+\frac{1}{2}}{2m-2} + i \cdot \frac{\pi(2k-1)}{n} \right) \hspace{1mm}\textrm{   for   } \hspace{1mm}0\leq l \leq m-2\textrm{ and } 1\leq k \leq n, \end{align} all of which are simple except for $0$ which is of multiplicity $n$.
\end{prop}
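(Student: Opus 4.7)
The plan is to treat the three regions $\{|z|<1\}$, $\{1\leq|z|\leq\exp(\pi/n)\}$, and $\{|z|>\exp(\pi/n)\}$ separately. The two exterior pieces are immediate: $g(z)=z^n$ contributes only the origin, as a zero of multiplicity $n$, while $g(z)=z^{mn}$ on $\{|z|>\exp(\pi/n)\}$ contributes nothing. So the work is within the annulus, where I first rule out zeros in $E_n$: by Definition~\ref{quasiregular_interpolation} and Proposition~\ref{eta_dilatation_bound}, $(z\mapsto z^{mn})\circ\eta_{n,m}$ lies in $\{1\leq|w|\leq\exp(m\pi)\}$, and by Definition~\ref{sigma_definition} $\sigma$ lies in $\mathbb{C}\setminus[-1,1]$, so neither formula defining $g|_{E_n}$ vanishes. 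Hence any nonzero zero of $g$ in the annulus must lie on one of the $n$ radial slits bounding $E_n$.

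To handle the slits, I use that (as shown in the proof of Proposition~\ref{unit_interpolation_lemma}) $g$ extends continuously across each slit to the value $\sigma(w)$, where $w\in\mathbb{T}$ is the common image of the two complex-conjugate one-sided limits of $(z\mapsto z^{mn})\circ\eta_{n,m}$. A direct unwinding of $\sigma=\mu\circ\nu\circ\mu$ shows that $\mu(e^{i\theta})=-i\cot(\theta/2)$ takes $\mathbb{T}$ onto the imaginary axis, $\nu$ collapses the imaginary axis onto the negative real axis, and the second application of $\mu$ sends $-t\in(-\infty,0)$ to $(t-1)/(t+1)$, which vanishes only at $t=1$. Tracing back, the only $w\in\mathbb{T}$ with $\sigma(w)=0$ are $w=\pm i$.

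It thus suffices to solve $\bigl(\eta_{n,m}(z)\bigr)^{mn}=\pm i$ on each slit. In the log coordinate $\zeta=(n/\pi)\log z$, the slit at $\arg z=(2k-1)\pi/n$ corresponds to the segment $\textrm{Im}(\zeta)=2k-1$, $\textrm{Re}(\zeta)\in[0,1/2]$. By Definition~\ref{PWL_map_def} (together with Schwarz reflection for $k\geq2$), $\psi_m$ sends the consecutive vertices $\zeta_l:=(2k-1)i+l/(2m-2)$, $l=0,\ldots,m-1$, to imaginary-axis points of imaginary part $(l+1)/m$, and is linear in between. Hence the midpoint $\zeta_l^*:=(2k-1)i+(l+1/2)/(2m-2)$ for $l=0,\ldots,m-2$ satisfies
\begin{equation*}
\bigl(\eta_{n,m}(z_l^*)\bigr)^{mn}=\exp\bigl(i\pi(2l+3)/2\bigr)\in\{\pm i\},
\end{equation*}
and since $\psi_m$ on each linear piece is a rigid parameterization of a straight segment on the imaginary axis, each piece contributes exactly this one solution. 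Applying $\exp((\pi/n)\,\cdot\,)$ to $\zeta_l^*$ then yields precisely the list~(\ref{listing_of_zeros}).

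Finally, simplicity of each nonzero zero follows from Proposition~\ref{simplest_critical_points_location}: the radii $\exp((\pi/n)(l+1/2)/(2m-2))$ appearing in~(\ref{listing_of_zeros}) are disjoint from the branched radii $\exp((\pi/n)\,l/(2m-2))$ in~(\ref{crit_pts}), so $g$ is a local quasiconformal homeomorphism at each nonzero zero. I expect the main obstacle to be the $\sigma^{-1}(0)\cap\mathbb{T}$ computation and correct bookkeeping of the Schwarz-reflected linear pieces of $\psi_m$ across the $n$ slits, but both amount to short direct calculations.
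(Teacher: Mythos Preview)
Your argument is correct and follows the same route as the paper's proof: identify $\sigma^{-1}(0)=\{\pm i\}$, reduce to solving $(\eta_{n,m}(z))^{mn}=\pm i$ on the radial slits, and read off the solutions from the PWL structure of $\psi_m$; you simply supply the details (the $\sigma$-computation, the vertex-by-vertex description of $\psi_m$ on the slit, and the simplicity check via Proposition~\ref{simplest_critical_points_location}) that the paper leaves to the reader. Two minor imprecisions worth tightening: for $k\geq 2$ the Schwarz reflections shift the imaginary part of $\psi_m(\zeta_l)$ by $2(k-1)$ (harmless after applying $z\mapsto z^{mn}$), and the case $l=0$ of your vertex formula $\psi_m(i)=i/m$ is not literally stated in~(\ref{second_verter_definition}) but follows from the compatibility of $T_m$ and $T_m'$.
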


\begin{proof} Recalling Definition \ref{sigma_definition} of $\sigma$, we see that $\sigma^{-1}(0)=\{\pm i\}$. Let $S:=(z\mapsto z^{mn})^{-1}(\pm i)$. Then $g^{-1}(0)$ is readily seen to be those points $x$ such that $\eta_{n,m}(x)\in S$ \emph{and} $x\in U$ for $U$ as in Definition \ref{quasiregular_interpolation}. Such $x$ are listed in (\ref{listing_of_zeros}). \end{proof}



 We now generalize our interpolating function slightly to allow for interpolation between $z\mapsto z^n$ and $z\mapsto z^M$ for $M>n$, where $M$ is not necessarily a multiple of $n$. While there is a necessary level of complication in the formulas in the following proof, the idea is simple. Denoting $m:=\lfloor M/n \rfloor$, we will use $\eta_{n,m}$ in some parts of the interpolating annulus, and $\eta_{n,m+1}$ in others, so that we add the necessary amount $M-n$ of new vertices (branched points). Next, we adjust by a homeomorphism $\tau$ of the circle which sends the $n$ old vertices and $M-n$ new vertices on $\mathbb{T}$ together to span the $M^{\textrm{th}}$ roots of unity, and finally post-compose with an adjustment of the map $z\mapsto z^M$ as in Definition \ref{quasiregular_interpolation}.

\begin{prop}\label{unit_interpolation_lemma_general} Let $n$, $M\in\mathbb{N}$ with $M>n$. Then there exists a quasiregular function \begin{equation} g: \{ z \in \mathbb{C} : 1 \leq |z| \leq \exp(\pi/n) \} \rightarrow \{ z \in \mathbb{C} : |z| \leq \exp(M\pi/n) \}, \end{equation} such that: \begin{enumerate} \item $g(z)=z^n$ if $|z|=1$, \item $g(z)=z^{M}$ if $|z|=\exp(\pi/n)$, and \item $K(g)$ depends only on $M/n$, and not otherwise on $M$ or $n$. \end{enumerate}
\end{prop}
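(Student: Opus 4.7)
Set $m := \lfloor M/n \rfloor$ and $p := M - mn \in \{0, 1, \ldots, n-1\}$. If $p = 0$ the statement is Proposition \ref{unit_interpolation_lemma}, so assume $1 \leq p \leq n-1$. The plan is to replace the uniform triangulation $T_m$ of $W$ used in Definitions \ref{triangulation}--\ref{PWL_map_def} with a hybrid of $T_m$ and $T_{m+1}$, then follow the template of Definition \ref{quasiregular_interpolation}, inserting a preliminary quasiconformal adjustment $\tau$ on the outer circle to normalize vertex positions before applying the power map $z \mapsto z^M$.

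\emph{Hybrid folding.} Modify Definitions \ref{triangulation} and \ref{PWL_map_def} so that the radial cuts in a modified region $\widetilde W$ come in two types, cyclically assigned: out of every $n$ consecutive cuts, $p$ are \emph{type-$(m+1)$} (carrying $m$ new vertices, as in the $\psi_{m+1}$ construction) and the other $n - p$ are \emph{type-$m$} (carrying $m - 1$ new vertices, as in $\psi_m$). The corresponding triangulation and PWL map $\widetilde\psi \colon \widetilde W \to \{0 < \textrm{Im}(z) < 1\}$ are built by locally applying either the $\psi_m$ or $\psi_{m+1}$ construction on each fundamental rectangle, with the vertex images on shared horizontal edges adjusted in the target strip so that the differing vertex counts of adjacent cells of different types glue consistently. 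By construction, $K(\widetilde\psi) \leq \max(K(\psi_m), K(\psi_{m+1}))$, a quantity depending only on $M/n$. Transferring to the $z$-plane via the exponential map of Definition \ref{E_n} produces a quasiconformal homeomorphism $\widetilde\eta \colon \widetilde E_n \to \{1 \leq |z| \leq \exp(\pi/n)\}$ which introduces the necessary $M - n$ internal branched points.

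\emph{Circle adjustment and power map.} Because of the hybrid structure, the $M$ distinguished points on $|z| = \exp(\pi/n)$---which must become pre-images of $\pm 1$ under the final $z \mapsto z^M$---are located at angles whose consecutive gaps lie in $\{\pi/(mn),\ \pi/((m+1)n)\}$, giving a bounded ratio depending only on $M/n$. Define a quasiconformal homeomorphism $\tau$ of $\{1 \leq |z| \leq \exp(\pi/n)\}$, equal to the identity on $|z| = 1$ and sending these $M$ points on $|z| = \exp(\pi/n)$ to the appropriate $(2M)$-th roots of unity scaled by $\exp(\pi/n)$. Taking $\tau$ to be PWL in logarithmic coordinates bounds $K(\tau)$ in terms of the gap ratio above, hence in terms of $M/n$. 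Finally, define $g := \sigma \circ (z \mapsto z^M) \circ \tau \circ \widetilde\eta$, with $\sigma$ applied (as in Definition \ref{quasiregular_interpolation}) in the neighborhoods of the cut images where $(z \mapsto z^M) \circ \tau \circ \widetilde\eta$ is $2$-valued. Then $g$ is quasiregular with $K(g) \leq K(\widetilde\psi) \cdot K(\tau) \cdot K(\sigma)$, depending only on $M/n$, establishing (3). Conclusion (1) follows as in Proposition \ref{unit_interpolation_lemma}, since $\tau$ is the identity on $|z|=1$ and $\widetilde\eta$ agrees with $\eta_{n,\cdot}$ on the inner circle within each angular sector; conclusion (2) follows because $\tau$ has placed the $M$ vertices at their canonical positions for $z\mapsto z^M$.

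The main obstacle is the coherent definition of the hybrid $\widetilde\psi$ in the first step: adjacent fundamental cells of types $m$ and $m+1$ place different numbers of vertices on their shared horizontal edges (after passing to the target $\{0 < \textrm{Im}(z) < 1\}$), so one must carefully choose the vertex images on those shared edges---for instance, by further subdividing the common edge or by PWL-interpolating between the two subdivisions---to obtain a globally PWL homeomorphism. The dilatation introduced by this matching is bounded uniformly in $M/n$ since the two subdivisions differ by only one vertex per cut. Once this hybrid $\widetilde\psi$ is constructed, the remainder of the argument is a direct adaptation of the proof of Proposition \ref{unit_interpolation_lemma}.
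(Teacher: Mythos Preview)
Your overall architecture matches the paper's: split the $n$ sectors into type-$m$ and type-$(m{+}1)$ blocks, use $\eta_{n,m}$ or $\eta_{n,m+1}$ accordingly, insert a circle-adjustment $\tau$, then apply $z\mapsto z^M$ with the $\sigma$-correction near the radial slits. Two points, however, deserve correction.

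\emph{The ``main obstacle'' is not an obstacle.} Adjacent sectors of different type share a boundary that corresponds, in logarithmic coordinates, to a horizontal line $y\in 2\mathbb{Z}$. On each such line the only vertices of either $T_m$ or $T_{m+1}$ are the two endpoints $2ki$ and $1+2ki$, and both $\psi_m$ and $\psi_{m+1}$ are the identity there (since $\psi_m(0)=0$, $\psi_m(1)=1$, and the extension is by Schwarz reflection). Hence $\eta_{n,m}$ and $\eta_{n,m+1}$ agree on the common radial boundaries of adjacent sectors, and the hybrid $\eta$ is automatically a homeomorphism with $K(\eta)\le\max(K(\eta_{n,m}),K(\eta_{n,m+1}))$. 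No subdivision or interpolation on shared edges is needed.

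\emph{Your $\tau$ is placed on the wrong boundary, and this breaks both (1) and (2).} You take $\tau$ to be the identity on $|z|=1$ and to move vertices on $|z|=\exp(\pi/n)$. But on the outer circle $\widetilde\eta$ is already the identity (since $\psi_m$, $\psi_{m+1}$ fix $\textrm{Re}(z)=1$), so there $g(z)=z^M\circ\tau(z)$, which is not $z^M$ once $\tau$ is nontrivial; conclusion (2) fails. On the inner circle, $\tau$ is the identity and in a type-$m$ half-sector $\widetilde\eta$ acts by $\theta\mapsto\theta/m$, so $g$ acts by $\theta\mapsto M\theta/m\neq n\theta$; conclusion (1) fails as well. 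The paper's fix is to reverse the roles: take $\tau$ to be the identity on $|z|=\exp(\pi/n)$ (securing (2)) and to rescale angles on $\mathbb{T}$ by the factor $nm/M$ in the type-$m$ sectors and $n(m{+}1)/M$ in the type-$(m{+}1)$ sectors, so that $z^M\circ\tau\circ\eta$ restricts to $\theta\mapsto n\theta$ on $|z|=1$ in every sector (securing (1)). With that single correction your argument goes through exactly as in Proposition~\ref{unit_interpolation_lemma}.
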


\begin{proof} Consider the region $E_n$ as in Definition \ref{E_n}. Let $m:=\lfloor M/n \rfloor$. Let \begin{equation} E_{n,j}:= E_n \cap \left\{ z\in\mathbb{C}\setminus\{0\} : \frac{2(j-1)\pi}{ n} \leq \arg(z) \leq \frac{2j\pi }{n} \right\} \textrm{ for } 1\leq j \leq n.\end{equation} Let $p:=n-M+nm$, and recall the mapping $\eta_{n,m}$ as in Definition \ref{E_n}, where we use the convention $\eta_{n,0}(z)\equiv z$. We define the mapping \begin{equation}\label{definition_of_eta_general} \eta(z):= \begin{cases} 
      \eta_{n,m}(z) & z\in \cup_{j=1}^{p}E_{n,j} \\
      \eta_{n,m+1}(z) & z\in \cup_{j=p+1}^{n}E_{n,j}.
   \end{cases} \end{equation} It is readily verified that $\eta: E_n \rightarrow \{ z \in \mathbb{C} : 1 \leq |z| \leq \exp(\pi/n)\}$ is a homeomorphism, since $\psi_m$, $\psi_{m+1}$ are both the identity on the lines $y=2\mathbb{Z}$. Next, we define a homeomorphism $\tau: \mathbb{T} \rightarrow \mathbb{T}$ by \begin{equation}\label{definition_of_tau} \tau(\theta):= \begin{cases} 
       \frac{nm}{M}\cdot\theta & \theta \in [0,2p\pi/n] \\
       \frac{(m+1)n}{M}\cdot\theta & \theta \in [-2\pi(n-p)/n,0].
   \end{cases} \end{equation} Extend $\tau$ to a self-homeomorphism of $A(1,\exp(\pi/n))$ by linearly interpolating $\tau|_{\mathbb{T}}$ with the identity on $|z|=\exp(\pi/n)$. Since $p/n$ and $(n-p)/n$ depend only on $M/n$, it is readily seen from (\ref{definition_of_tau}) that $K(\tau)$ depends only on $M/n$, and not otherwise on $M$ or $n$.
   
Finally, we define $g$ by appropriately adjusting Definition \ref{quasiregular_interpolation}. Namely, let $X:=\textrm{supp}(\sigma_{\overline{z}}/\sigma_{z})$ as in Remark \ref{r_remark}. Then the pullback of $X$ under \begin{equation} (z\mapsto z^M)\circ\tau\circ\eta: E_n \rightarrow A(1, \exp(M\pi/n)) \end{equation} consists of $2M$ components. Denoting by $U$ the union of those $2(M-n)$ components which neighbor a radial arc on the boundary of $E_n$, we again define \begin{equation}\label{definition_of_g_general} g(z):= \begin{cases} 
      \sigma \circ (z\mapsto z^{M}) \circ \eta(z) & z\in U \\
      (z\mapsto z^{M}) \circ \eta(z) & z\in E_{n}\setminus U.
   \end{cases} \end{equation} The proof that $g$ satisfies (1)-(3) then is the same as in Proposition \ref{unit_interpolation_lemma}. \end{proof}


Following Proposition \ref{simplest_critical_points_location}, we will also list the branched points, branched values, and zeros of the map $g$ of Proposition \ref{unit_interpolation_lemma_general}. But first, we describe a simple rescaling which allows our annular region of interpolation to have an inner boundary lying on a circle of variable radius.

\begin{prop}\label{interpolation_with_x} Let $r>0$, $c\in\mathbb{C}^{\star}$ and $n$, $M\in\mathbb{N}$ with $M>n$. There exists a quasiregular function \begin{equation} g: \{ z \in \mathbb{C} : r \leq |z| \leq r\exp(\pi/n) \} \rightarrow \{ z \in \mathbb{C} : |z| \leq cr^{n}\exp(M\pi/n) \}, \end{equation} such that: \begin{enumerate} \item $g(z)=c\cdot z^n$ if $|z|=r$, \item $g(z)=c\cdot z^{M}/r^{M-n}$ if $|z|=r\exp(\pi/n)$, and \item $K(g)$ depends only on $M/n$, and not otherwise on $M$, $n$, $r$ or $c$. \end{enumerate}
\end{prop}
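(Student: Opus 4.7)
The plan is to reduce Proposition \ref{interpolation_with_x} to Proposition \ref{unit_interpolation_lemma_general} by an elementary rescaling. Let $\widehat{g}$ denote the quasiregular function produced by Proposition \ref{unit_interpolation_lemma_general}, so that $\widehat{g}: \{ 1\leq |z|\leq \exp(\pi/n)\} \rightarrow \{|z|\leq \exp(M\pi/n)\}$ satisfies $\widehat{g}(z)=z^n$ on $|z|=1$, $\widehat{g}(z)=z^M$ on $|z|=\exp(\pi/n)$, and $K(\widehat{g})$ depends only on $M/n$.

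I would simply define
\begin{equation}
g(z) := c\, r^n \cdot \widehat{g}(z/r),\qquad r\leq |z| \leq r\exp(\pi/n).
\end{equation}
Then $z\mapsto z/r$ is a conformal bijection of $\{r\leq |z|\leq r\exp(\pi/n)\}$ onto $\{1\leq |z|\leq \exp(\pi/n)\}$, and post-composition with the conformal map $w\mapsto c\,r^n w$ preserves quasiregularity with the same dilatation. Hence $g$ is quasiregular on the target annulus with $K(g)=K(\widehat{g})$, which depends only on $M/n$, giving (3).

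For (1), if $|z|=r$ then $|z/r|=1$, so $\widehat{g}(z/r) = (z/r)^n = z^n/r^n$ and thus $g(z) = c\,r^n \cdot z^n/r^n = c z^n$. For (2), if $|z|=r\exp(\pi/n)$ then $|z/r|=\exp(\pi/n)$, so $\widehat{g}(z/r) = (z/r)^M = z^M/r^M$ and thus $g(z) = c\,r^n \cdot z^M/r^M = c z^M / r^{M-n}$. The bound on the image follows since $|\widehat{g}(z/r)| \leq \exp(M\pi/n)$ implies $|g(z)|\leq |c|\,r^n\exp(M\pi/n)$.

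There is no real obstacle here; the only subtlety is bookkeeping, namely checking that the chosen prefactor $c\, r^n$ simultaneously produces the correct coefficient $c$ on the inner boundary and the correct coefficient $c/r^{M-n}$ on the outer boundary. This is forced uniquely by the boundary condition on $|z|=r$, and one then verifies that it automatically yields the correct formula on $|z|=r\exp(\pi/n)$, as above.
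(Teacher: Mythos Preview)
Your argument is correct and is essentially the paper's own proof: the paper likewise sets $g(z)=c\,r^n\cdot\tilde g(z)$ where $\tilde g$ is the unit-scale interpolant precomposed with $z\mapsto z/r$, and then invokes Proposition \ref{unit_interpolation_lemma_general} for the general $M>n$ case. The only cosmetic difference is that the paper first redoes the $M=mn$ construction with the rescaling built in before appealing to the general-$M$ adjustment, whereas you invoke Proposition \ref{unit_interpolation_lemma_general} directly; your route is in fact slightly cleaner.
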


\begin{proof} We first consider the case when $M=m\cdot n$ for $m\in\mathbb{N}$. Let \begin{equation} E_{n,r} := \exp\left(\frac{\pi}{n}\cdot W +\log r\right).\end{equation} By Proposition \ref{eta_dilatation_bound}, the map \begin{equation} \eta_{n,m,r}:=\eta_{n,m} \circ (z\mapsto z/r): E_{n,r} \rightarrow \left\{ z \in \mathbb{C} : 1 \leq |z| \leq \exp(\pi/n) \right\}   \end{equation} is a quasiconformal homeomorphism where $K(\eta_{n,m,r})$ depends only on $m$. Following Definition \ref{quasiregular_interpolation}, we can define a map $\tilde{g}$ in $E_{n,r}$ by adjusting the values of $(z\mapsto z^{mn}) \circ \eta_{n,m,r}$ in a neighborhood of the radial arcs on the boundary of $E_{n,r}$. The proof that $\tilde{g}$ is quasiregular on $\{ z \in \mathbb{C} : r \leq |z| \leq r\exp(\pi/n) \}$ is then the same as in Proposition \ref{unit_interpolation_lemma}. Set \begin{align} g(z):=c\cdot r^n\cdot\tilde{g}(z). \end{align} Then the proof that $g$ satisfies (1)-(3) similarly follows as in Proposition \ref{unit_interpolation_lemma}. Lastly, the case when $M>n$ is not necessarily a multiple of $n$ follows from the same adjustment of the above interpolation as in Proposition \ref{unit_interpolation_lemma_general}. \end{proof}

\begin{rem} When we wish to emphasize the dependence of $g$ on the parameters $n$, $M$, $r$, $c$, we will write $g_{n, M, r, c}$ (in that order).
\end{rem}

\noindent Lastly, we record the branched points, branched values, and zeros of our interpolating function. 

\begin{prop}\label{crit_pts_more_comp} Let $g$, and notation be as in Proposition \ref{interpolation_with_x}. Set $m:=\lfloor M/n \rfloor$, and $p:=n-M+nm$. Define $g(z):=cz^n$ for $|z|<r$, and $g(z):=cz^{M}/r^{M-n}$ for $|z|>r\exp(\pi/n)$. Then the only non-zero branched points of $g$ are: \begin{align}\label{crit_pts_comp} r\cdot\exp\left(  \frac{\pi}{n}\cdot\frac{ l}{(2m-2)}+i\cdot\frac{\pi(2k-1)}{n} \right)\hspace{1mm}\textrm{   for   } \hspace{1mm}0\leq l \leq m-2\textrm{ and } 1\leq k \leq p\textrm{, and} \\  r\cdot\exp\left(  \frac{\pi}{n}\cdot\frac{ l}{2m}+i\cdot\frac{\pi(2k-1)}{n} \right)\hspace{1mm}\textrm{   for   } \hspace{1mm}0\leq l \leq m-1\textrm{, and } p+1\leq k \leq n. \nonumber\phantom{asdf}\end{align} Moreover, all non-zero branched points of $g$ are simple, and the only non-zero branched values of $g$ are $\pm cr^n$. The zeros of $g$ are given by the multiplicity $n$ zero at $0$, and the simple zeros described by replacing $l$ with $l+1/2$ in the expressions (\ref{crit_pts_comp}).
\end{prop}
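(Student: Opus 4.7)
The plan is to reduce Proposition \ref{crit_pts_more_comp} to the corresponding statements in Propositions \ref{simplest_critical_points_location} and \ref{location_of_zeros}, applied sector-by-sector to account for the mixed use of $\eta_{n,m}$ and $\eta_{n,m+1}$ in the construction of Proposition \ref{unit_interpolation_lemma_general}. First, following the proof of Proposition \ref{interpolation_with_x}, one writes $g(z) = cr^n \cdot \tilde{g}(z)$ where $\tilde{g}$ is the rescaled (by $z \mapsto z/r$) version of the unscaled interpolation. Under this affine change of coordinates, branched points of $g$ are obtained from the branched points of the unscaled map by dilation by $r$, branched values scale by $cr^n$, and zeros dilate by $r$; so it suffices to prove the corresponding statements for the unscaled interpolation on $\{1 \leq |z| \leq \exp(\pi/n)\}$.

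Next, recall that in Proposition \ref{unit_interpolation_lemma_general} the map $\eta$ is defined piecewise: it equals $\eta_{n,m}$ on the sectors $E_{n,j}$ for $1 \leq j \leq p$, and equals $\eta_{n,m+1}$ on $E_{n,j}$ for $p+1 \leq j \leq n$. The additional maps $\tau$ and $\sigma$ act respectively as a homeomorphism (introducing no branched points) and as the identification across the radial slits already analyzed in Proposition \ref{simplest_critical_points_location}. Within a sector using $\eta_{n,m}$, that analysis applies verbatim and places simple branched points on the enclosed slit at angle $(2k-1)\pi/n$ at radii $\exp\left(\frac{\pi}{n}\cdot \frac{l}{2m-2}\right)$ for $0 \leq l \leq m-2$, each mapping to $\pm 1$ as determined by the coloring of the vertex (cf.\ Remark \ref{coloring}). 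This yields the first family in (\ref{crit_pts_comp}), indexed by $1 \leq k \leq p$. In a sector using $\eta_{n,m+1}$, the same argument with $m$ replaced by $m+1$ produces radii $\exp\left(\frac{\pi}{n}\cdot \frac{l}{2m}\right)$ for $0 \leq l \leq m-1$, yielding the second family, indexed by $p+1 \leq k \leq n$. Scaling by $cr^n$ converts the unscaled branched values $\pm 1$ into $\pm cr^n$.

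The analysis of zeros proceeds in parallel via Proposition \ref{location_of_zeros}: on each slit the zeros are the midpoints between adjacent vertices, obtained by substituting $l + \tfrac{1}{2}$ for $l$ in the branched-point formula within each sector. After dilation by $r$, this yields the zero formulas in the statement, and the zero at $z=0$ of multiplicity $n$ arises from the holomorphic extension $g(z)=cz^n$ for $|z|<r$. The main subtlety, though routine once the setup is in place, is to verify that the angular homeomorphism $\tau$ together with the $\sigma$-adjustment on $U$ does not shift branched points off the slits nor create new ones. Since $\tau$ is a homeomorphism and $\sigma$ is conformal outside its support $X$, the combinatorial structure of the slits and the two triangulations forces the branched points to be exactly those identified by Proposition \ref{simplest_critical_points_location} applied separately to the two families of sectors.
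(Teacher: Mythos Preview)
Your proposal is correct and follows exactly the approach the paper indicates: the paper's own proof is the single sentence ``The same proofs as for Propositions~\ref{simplest_critical_points_location} and~\ref{location_of_zeros} prove the following,'' and you have carried out precisely that reduction---scaling back to the unit annulus via $z\mapsto z/r$ and $w\mapsto cr^n w$, then applying Proposition~\ref{simplest_critical_points_location} sector-by-sector with parameter $m$ on $E_{n,1},\dots,E_{n,p}$ and parameter $m+1$ on $E_{n,p+1},\dots,E_{n,n}$. Your observation that $\tau$ is a homeomorphism (hence introduces no branching) and that $\sigma$ acts only along the slits exactly as in the basic case is the right justification that nothing new appears.
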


\begin{proof} The proof is essentially the same as the proofs of Propositions \ref{simplest_critical_points_location} and \ref{location_of_zeros}. We provide a sketch. By the definition of $g$, a small neighborhood of any point in (\ref{crit_pts_comp}) is mapped $2:1$ onto its image, and each of the points in (\ref{crit_pts_comp}) is mapped to $\pm cr^n$. Thus each of the points in (\ref{crit_pts_comp}) is a simple branched point, with corresponding branched values $\pm cr^n$.  A small neighborhood of any other non-zero point (not listed in (\ref{crit_pts_comp})) is mapped homeomorphically onto its image, thus (\ref{crit_pts_comp}) lists all the branched values of $g$. Similarly, the statement about the zeros of $g$ follows by inspection of which points $z\mapsto(z\mapsto z^M)\circ \eta(z)$ maps to $\pm i$ as in the proof of Proposition \ref{location_of_zeros}. \end{proof}



\section{Entire Functions}
\label{Entire_function_section}

With the technical work of Section \ref{interpolation_section} behind us, we will now apply our interpolation repeatedly between power maps of increasing degree in ``increasing'' annuli in the plane. As long as the degree of the power maps increases by at most a fixed constant factor in each consecutive annulus, this procedure gives a function quasiregular in $\mathbb{C}$. We formalize this below.


\begin{definition}\label{general_powers_def} Let $c\in\mathbb{C}^\star$, $(M_j)_{j=1}^\infty \in \mathbb{N}$ be increasing, and $(r_j)_{j=1}^\infty \in \mathbb{R}^+$. We set  $r_0:=0$,  $M_0:=1$. Suppose that \begin{equation}\label{growth_condition} r_{j+1} \geq  \exp\left(\pi\big/M_j\right) \cdot r_j \textrm{ for all } j\in\mathbb{N}, \textrm{ and } r_j\xrightarrow{j\rightarrow\infty}\infty. \end{equation} Set \begin{equation}\label{c_defn} c_1:=c\textrm{, and } c_j:=c_{j-1}\cdot r_{j-1}^{M_{j-1}-M_{j}}=c\cdot\prod_{k=2}^{j}r_{k-1}^{M_{k-1}-M_{k}} \textrm{ for } j\geq2. \end{equation}  We then define: \begin{equation}\label{h_formula} h(z):= \begin{cases} 
      c_j\cdot z^{M_j} & \textrm{ if } \hspace{2mm} r_{j-1}\cdot\exp(\pi/M_{j-1}) \leq |z| \leq r_j \\
     g_{M_{j}, M_{j+1}, r_j, c_j}(z) & \textrm{ if } \hspace{2mm}  r_j \leq |z| \leq r_j\cdot \exp(\pi/M_j). 
   \end{cases} \end{equation} over all $j\in\mathbb{N}$. \end{definition}
   
 \begin{rem} By Proposition \ref{interpolation_with_x}, the two definitions in (\ref{h_formula}) agree on $|z|=r_j$. By Proposition \ref{interpolation_with_x} and the definition (\ref{c_defn}) of $c_j$, the two definitions in (\ref{h_formula}) agree on $|z|=r_j\cdot\exp(\pi/M_j)$. Thus the formula (\ref{h_formula}) determines a well-defined function $h: \mathbb{C} \rightarrow \mathbb{C}$.


 \end{rem}

\begin{definition} We will say that $(M_j)_{j=1}^\infty \in \mathbb{N}$, $(r_j)_{j=1}^\infty \in \mathbb{R}^+$ are \emph{weakly permissible} if (\ref{growth_condition}) is satisfied.
\end{definition}

\begin{rem} Suppose $(M_j)_{j=1}^\infty$, $(r_j)_{j=1}^\infty$ are weakly permissible. Then, according to Definition \ref{permissible}, we have that $(M_j)_{j=1}^\infty$, $(r_j)_{j=1}^\infty$ are permissible if and only if the sequence $(M_j/M_{j-1})_{j=1}^\infty$ is bounded.

\end{rem}

\begin{rem} The map $h$ of Definition \ref{general_powers_def} is determined by a choice of $c\in\mathbb{C}^\star$ and weakly permissible $(M_j)_{j=1}^\infty$, $(r_j)_{j=1}^\infty $.  
\end{rem}

\begin{prop}\label{quasiregularity} Suppose $(M_j)_{j=1}^\infty$, $(r_j)_{j=1}^\infty$ are weakly permissible, and $c\in\mathbb{C}^\star$. Then the function $h$ as defined in Definition \ref{general_powers_def} is quasiregular on compact subsets of $\mathbb{C}$. If, moreover, $(M_j)_{j=1}^\infty$, $(r_j)_{j=1}^\infty$ are permissible, then $h$ is quasiregular on $\mathbb{C}$ and $K(h)$ depends only on $\emph{sup}_j(M_j/M_{j-1})$ and not otherwise on $(M_j)_{j=1}^\infty$, $(r_j)_{j=1}^\infty$.
\end{prop}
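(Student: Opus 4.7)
The plan is to verify quasiregularity piece by piece on the two families of annuli in (\ref{h_formula}), then glue the pieces together by checking continuity across the annular boundaries and invoking the standard removability result for quasiregular maps across analytic arcs (the same result already used in Proposition \ref{unit_interpolation_lemma}). The uniform dilatation estimate in the permissible case will then drop out of Proposition \ref{interpolation_with_x}(3).

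First I would handle the individual pieces. On each ``large'' annulus $\{r_{j-1}\exp(\pi/M_{j-1}) \le |z| \le r_j\}$, the function $h(z)=c_j z^{M_j}$ is holomorphic, so it is quasiregular with $K=1$; note that for $j=1$ this annulus degenerates to the closed disk $\{|z|\le r_1\}$ because $r_0=0$, and $h$ extends holomorphically through the origin. On each ``thin'' annulus $\{r_j \le |z| \le r_j\exp(\pi/M_j)\}$, $h=g_{M_j,M_{j+1},r_j,c_j}$, which is quasiregular by Proposition \ref{interpolation_with_x}, with a dilatation constant $K_j$ depending only on $M_{j+1}/M_j$.

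Next I would deal with the gluing. As already noted in the remark following Definition \ref{general_powers_def}, the two formulas in (\ref{h_formula}) agree on $|z|=r_j$ by Proposition \ref{interpolation_with_x}(1), and they agree on $|z|=r_j\exp(\pi/M_j)$ by Proposition \ref{interpolation_with_x}(2) together with the recursion $c_{j+1}=c_j\cdot r_j^{M_j-M_{j+1}}$ of (\ref{c_defn}). Thus $h$ is continuous on all of $\mathbb{C}$. Each gluing locus is a real-analytic circle, and $h$ is quasiregular on each side with a common dilatation bound in a neighborhood of the circle; the removability of analytic arcs for quasiregular maps then promotes $h$ to a quasiregular map on an open neighborhood of each circle. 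Combined with the pieces above, $h$ is quasiregular in a neighborhood of every point of $\mathbb{C}$.

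Finally I would convert this local information into the two global statements. Since $r_j\to\infty$, any compact $K\subset\mathbb{C}$ lies inside $\{|z|\le r_{j_0}\}$ for some $j_0$, hence meets only finitely many of the annuli in (\ref{h_formula}); the dilatation of $h$ on $K$ is then bounded by $\max_{j\le j_0}K_j<\infty$, giving quasiregularity on compact subsets under the weak permissibility hypothesis. When $(M_j), (r_j)$ are permissible, the sequence $(M_{j+1}/M_j)$ is bounded, and Proposition \ref{interpolation_with_x}(3) gives $\sup_j K_j<\infty$ with the bound depending only on $\sup_j(M_{j+1}/M_j)=\sup_j(M_j/M_{j-1})$, yielding the claimed uniform dilatation bound. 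There is no real obstacle to the argument; the only mildly delicate point is the removability step, but this is a standard fact that the paper already relies on.
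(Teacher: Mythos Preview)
Your proposal is correct and follows essentially the same approach as the paper: verify quasiregularity on each annular piece (holomorphic on the large annuli, quasiregular via Proposition~\ref{interpolation_with_x} on the thin ones), glue across the boundary circles using removability of analytic arcs, and then invoke Proposition~\ref{interpolation_with_x}(3) for the uniform dilatation bound in the permissible case. Your version is simply more explicit about continuity and the $j=1$ degeneration, but the argument is the same.
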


\begin{proof} In each annular region in the Definition (\ref{h_formula}), the map $h$ is either analytic, or quasiregular by Proposition \ref{interpolation_with_x}. By removability of analytic arcs, the map $h$ is therefore quasiregular across the boundaries of the annular regions. As any compact subset of $\mathbb{C}$ meets only finitely many such annular regions, it follows that $h$ is locally quasiregular. If the sequence $(M_j/M_{j-1})_{j=1}^\infty$ is bounded (or equivalently $(M_j)_{j=1}^\infty$, $(r_j)_{j=1}^\infty$ are permissible), then each of the maps $g_{M_j, M_{j+1}, r_j, c_j}(z)$ have a dilatation bounded uniformly over $j$ (with a bound depending only on $\textrm{sup}_j(M_j/M_{j-1})$) by (3) of Proposition \ref{interpolation_with_x}, and so the last statement of the Proposition follows. \end{proof}


The definition of \emph{permissible} thus ensures that the formula (\ref{h_formula}) defines a quasiregular function, in which case we may integrate the Beltrami coefficient $h_{\overline{z}}/h_z$ to obtain a quasiconformal map $\phi$ such that $h\circ\phi^{-1}$ is holomorphic. If, moreover, the $M_j$ increase sufficiently quickly (see (\ref{summability})), the total region of interpolation is sufficiently small to guarantee conformality of $\phi$ at $\infty$. This is the content of Theorem \ref{TWB_application} below, which will be presented after the following definition:

\begin{definition}\label{strongly_permissible} Let $(M_j)_{j=1}^\infty, (r_j)_{j=1}^\infty$ be permissible. We say that  $(M_j)_{j=1}^\infty, (r_j)_{j=1}^\infty$ are \emph{strongly permissible} if also \begin{equation}\label{summability} \sum_jM_j^{-1}<\infty. \end{equation}
\end{definition}

\begin{thm}\label{TWB_application} Let $(M_j)_{j=1}^\infty$, $(r_j)_{j=1}^\infty $ be strongly permissible, and $c\in\mathbb{C}^\star$. Then there exists a quasiconformal mapping $\phi: \mathbb{C} \rightarrow \mathbb{C}$ such that $f:=h\circ\phi^{-1}$ as in Theorem \ref{mainthm} is holomorphic, and \begin{equation}\label{conformality_at_infty} \left|\frac{\phi(z)}{z} - 1\right|\xrightarrow{z\rightarrow\infty} 0.  \end{equation}
\end{thm}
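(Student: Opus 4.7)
The plan is to obtain $\phi$ by applying the Measurable Riemann Mapping Theorem (MRMT) to the Beltrami coefficient $\mu_h := h_{\bar{z}}/h_z$, and then to deduce (\ref{conformality_at_infty}) by applying Theorem \ref{TWB} to the inversion-conjugate of $\phi$ near $0$. By strong permissibility, $(M_j), (r_j)$ are in particular permissible, so Proposition \ref{quasiregularity} gives that $h$ is quasiregular on $\mathbb{C}$ with $\|\mu_h\|_\infty < 1$. MRMT then produces a quasiconformal $\phi: \mathbb{C} \to \mathbb{C}$ satisfying $\phi_{\bar{z}}/\phi_z = \mu_h$ a.e., and $f := h \circ \phi^{-1}$ is automatically holomorphic because its Beltrami coefficient vanishes. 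I would normalize by $\phi(0)=0$ and $\phi(\infty)=\infty$, retaining the freedom to post-compose by an affine map $z \mapsto \alpha z$, which preserves $\mu_h$.

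For (\ref{conformality_at_infty}), I would pass to $\tilde\phi(z) := 1/\phi(1/z)$, which is quasiconformal with $\tilde\phi(0)=0$, and observe that conformality of $\tilde\phi$ at $0$ with nonzero $\tilde\phi_z(0)$ is equivalent to the existence of a nonzero limit $\lim_{w\to\infty}\phi(w)/w = 1/\tilde\phi_z(0)$. Theorem \ref{TWB} applies to $\tilde\phi$ once its dilatation integral is shown to be finite. Since the inversion $z \mapsto 1/z$ is conformal, $D_{\tilde\phi}(z) = D_\phi(1/z)$, and the change of variable $w = 1/z$ transforms the TWB integral for $\tilde\phi$ into
\begin{equation*}
\tilde I(r) \; = \; \frac{1}{2\pi}\int_{|w|>1/r}\frac{D_\phi(w)-1}{|w|^2}\,\textrm{d}A(w),
\end{equation*}
so it suffices to bound $\int_\mathbb{C}(D_\phi-1)|w|^{-2}\,\textrm{d}A(w)$.

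This is where strong permissibility enters, and is the main (though unsurprising) step. Since $f$ is holomorphic, $D_\phi = D_h$ a.e., and by Definition \ref{general_powers_def} the set where $h$ is not locally holomorphic lies in $\bigcup_j A_j$, where $A_j := \{r_j \leq |w| \leq r_j\exp(\pi/M_j)\}$. Proposition \ref{quasiregularity} bounds $D_\phi$ on $\bigcup_j A_j$ by a constant $C$ depending only on $\sup_j(M_{j+1}/M_j)$, and a direct polar-coordinate computation gives $\int_{A_j}|w|^{-2}\,\textrm{d}A(w) = 2\pi^2/M_j$. Summing,
\begin{equation*}
\int_\mathbb{C}\frac{D_\phi(w)-1}{|w|^2}\,\textrm{d}A(w) \; \leq \; 2\pi^2(C-1)\sum_{j=1}^\infty\frac{1}{M_j} \; < \; \infty
\end{equation*}
by (\ref{summability}). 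Theorem \ref{TWB} then yields $\phi(w)/w \to \gamma$ for some $\gamma \in \mathbb{C}^\star$ as $w \to \infty$, and post-composing $\phi$ by $z \mapsto z/\gamma$ leaves $\mu_h$ unchanged while yielding the normalization (\ref{conformality_at_infty}). The conceptual point is that strong permissibility is precisely the hypothesis that makes the resulting series converge after the inversion change of variables; everything else is routine bookkeeping.
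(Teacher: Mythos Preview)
Your proof is correct and follows essentially the same approach as the paper: apply MRMT to $\mu_h$, conjugate by inversion, and verify the hypothesis of Theorem~\ref{TWB} by bounding the dilatation integral over the thin interpolation annuli. Your polar-coordinate evaluation $\int_{A_j}|w|^{-2}\,\textrm{d}A(w)=2\pi^2/M_j$ is in fact slightly cleaner than the paper's estimate $\exp(2\pi/M_j)-1$, but the argument is otherwise the same.
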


\begin{proof} The proof is an application of Theorem \ref{TWB} (the Teichm\"uller-Wittich-Belinskii Theorem). The existence of a quasiconformal $\phi: \mathbb{C} \rightarrow\mathbb{C}$ such that $h\circ\phi^{-1}$ is holomorphic and $\phi(0)=0$ follows from applying the Measurable Riemann Mapping Theorem to $h_{\overline{z}}/h_z$.  Let \begin{equation} \psi(z):=1/\phi(1/z). \end{equation} Then $\psi$ is a quasiconformal self-mapping of $\mathbb{C}$ satisfying $\psi(0)=0$. Let $K$ denote the quasiconformal constant of $\psi$, and take $r<\infty$. We calculate \begin{align}  I(r):=\frac{1}{2\pi}\int_{|z|<r} \frac{D(z)-1}{|z|^2}\emph{d}A(z) < \frac{1}{2\pi}\int_{|z|<1} \frac{K-1}{|z|^2}\emph{d}A(z) = \frac{1}{2\pi}\sum_{j=1}^\infty \int_{B_j}\frac{K-1}{|z|^2}\emph{d}A(z), \end{align} where \begin{equation} B_j:=\{z\in\mathbb{C} : r_j^{-1}\exp(-\pi/M_j) \leq |z| \leq r_j^{-1} \}. \end{equation} We continue our calculation: \begin{align}\label{calculation} \frac{1}{2\pi}\sum_{j=1}^\infty \int_{B_j}\frac{K-1}{|z|^2}\emph{d}A(z) \leq \frac{K-1}{2\pi}\sum_{j=1}^\infty \int_{B_j} \frac{1}{r_j^{-2}\exp(-2\pi/M_j)}\emph{d}A(z) = \\ \frac{K-1}{2\pi}\sum_{j=1}^\infty \frac{\pi(r_j^{-2}-r_j^{-2}\exp(-2\pi/M_j))}{r_j^{-2}\exp(-2\pi/M_j)} =  \frac{K-1}{2}\sum_{j=1}^\infty\big(\exp(2\pi/M_j)-1\big).  \nonumber \end{align} The infinite sum on the right-hand side of (\ref{calculation}) converges if and only if \begin{equation} \prod_{j=1}^\infty \exp(2\pi/M_j) \end{equation} converges, which is readily seen to be the case by assumption of (\ref{summability}). Thus, by Theorem \ref{TWB}, the limit \begin{equation} \psi_z(0):=\lim_{z\rightarrow0}\frac{\psi(z)}{z} \end{equation} exists, and we have \begin{equation} \left| \frac{\psi(z)}{z} - \psi_z(0) \right| \xrightarrow{|z|\rightarrow0} 0. \end{equation} By multiplying $\psi$ by $1/\psi_z(0)$ if necessary, we can further assume that $\psi_z(0)=1$. Since $\psi(z)=1/\phi(1/z)$, (\ref{conformality_at_infty}) follows. \end{proof}




\noindent Next we list the singularities of the entire function $f$:

\begin{prop}\label{critical_point_listing}
Let $(M_j)_{j=1}^\infty$, $(r_j)_{j=1}^\infty$ be permissible, $c\in\mathbb{C}^\star$, and $f$, $\phi$ as in Theorem \ref{TWB_application}. Set $m_j:=\lfloor M_j/M_{j-1} \rfloor$, and $p_j:=M_{j-1}-M_j+M_{j-1}m_j$. Then the only critical points of $f$ are $0$ and the simple critical points given by \begin{align}\label{final_crit_pts_listing} \phi\left( r_j\cdot \exp\left(\frac{\pi}{M_{j-1}} \cdot \frac{l_j}{2m_j-2}+i\frac{(2k_j-1)\pi}{M_{j-1}} \right) \right)\textrm{, and } \\ \phi\left(  r_j\cdot \exp\left(\frac{\pi}{M_{j-1}} \cdot \frac{l_j'}{2m_j}+i\frac{(2k_j'-1)\pi}{M_{j-1}} \right) \right),\phantom{, and }\nonumber\end{align} where $j\in\mathbb{N}$, and $1 \leq k_j \leq p_j$, $0\leq l_j \leq m_j-2$, and $p_j+1 \leq k_j' \leq M_{j-1}$, $0\leq l_j' \leq m_j-1$. The only singular values of $f$ are the critical values $(\pm c_jr_j^{M_j})_{j=0}^\infty$.

\end{prop}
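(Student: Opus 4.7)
The plan is to decompose $h$ according to Definition \ref{general_powers_def}, enumerate its branched points and branched values on each piece, push them forward by $\phi$ to obtain the critical data of $f$, and then separately rule out finite asymptotic values. The key point is that since $f=h\circ\phi^{-1}$ with $\phi$ a homeomorphism, the critical points of $f$ are precisely the $\phi$-images of the branched points of $h$ (with the same multiplicities), and the identity $f\circ\phi=h$ shows that the critical values of $f$ coincide with the branched values of $h$. It therefore suffices to enumerate the branched data of $h$.

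On the initial disk $\{|z|\leq r_1\}$, where $h=c_1z^{M_1}$, the only branched point is $0$, with image $0$ (matching the $j=0$ term $\pm c_0r_0^{M_0}=0$). Each subsequent analytic annulus $\{r_{j-1}\exp(\pi/M_{j-1})\leq|z|\leq r_j\}$ carries a power map on a topological annulus not containing the origin, and therefore contributes no branched points. On each interpolation annulus $\{r_j\leq|z|\leq r_j\exp(\pi/M_j)\}$, $h$ equals the map produced by Proposition \ref{interpolation_with_x}, so Proposition \ref{crit_pts_more_comp} yields a complete enumeration of its branched points, all simple, together with the statement that the only branched values are $\pm c_jr_j^{M_j}$. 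Assembling these lists (with the index matching between Proposition \ref{crit_pts_more_comp} and (\ref{final_crit_pts_listing})) and applying $\phi$ yields the desired enumeration and identifies the critical values of $f$ as $(\pm c_jr_j^{M_j})_{j=0}^\infty$.

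To finish, I must show $f$ has no finite asymptotic value, so that these critical values exhaust the singular set. The key lower bound is that on each analytic annulus $\{r_{j-1}\exp(\pi/M_{j-1})\leq|z|\leq r_j\}$ one has $|h(z)|=|c_j||z|^{M_j}\geq|c_j|r_{j-1}^{M_j}\exp(\pi M_j/M_{j-1})=|c_{j-1}r_{j-1}^{M_{j-1}}|\,e^{\pi}$, using the recursion $c_jr_{j-1}^{M_j}=c_{j-1}r_{j-1}^{M_{j-1}}$ and $M_j/M_{j-1}\geq 1$. Iterating yields $|c_jr_j^{M_j}|\to\infty$. Now if $\gamma:[0,1)\to\mathbb{C}$ satisfies $\gamma(t)\to\infty$ and we set $\tilde\gamma:=\phi^{-1}\circ\gamma$, then $\tilde\gamma(t)\to\infty$, so by the intermediate value theorem $\tilde\gamma$ enters every analytic annulus for all large $j$, and the bound above forces $\limsup_{t\to 1}|f(\gamma(t))|=\limsup_{t\to 1}|h(\tilde\gamma(t))|=\infty$. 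Hence $f(\gamma(t))$ cannot converge to a finite limit, so there are no finite asymptotic values. I expect this asymptotic-value step to be the main obstacle, as the critical-point enumeration itself is essentially bookkeeping once Proposition \ref{crit_pts_more_comp} is applied to each interpolation annulus.
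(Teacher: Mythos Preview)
Your proposal is correct and follows essentially the same approach as the paper: enumerate the branched points of $h$ via Proposition \ref{crit_pts_more_comp} on each interpolation annulus (with $0$ the only branched point on the initial disk and none on the power-map annuli), push forward by $\phi$, and then rule out finite asymptotic values by showing that $f(\gamma)$ is unbounded for any curve $\gamma\to\infty$. The paper's proof simply asserts the last step without justification; your explicit lower bound $|c_jr_j^{M_j}|\geq e^{\pi}|c_{j-1}r_{j-1}^{M_{j-1}}|$ (obtained by evaluating $|c_j z^{M_j}|$ at the inner boundary of the $j$-th analytic annulus and using the recursion for $c_j$) is a welcome addition that makes the argument self-contained.
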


\begin{proof} By Proposition \ref{crit_pts_more_comp}, the only branched points of $h$ are $0$ and the points given by (\ref{final_crit_pts_listing}) without the $\phi$ factor. Thus, as $f:=h\circ\phi^{-1}$, it follows the only critical points of $f$ are $\phi(0)=0$ and those given in (\ref{final_crit_pts_listing}). Again, as $f:=h\circ\phi^{-1}$, by Proposition \ref{crit_pts_more_comp} each of the points in (\ref{final_crit_pts_listing}) is mapped to $\pm c_jr_j^{M_j}$ as $j$ ranges over $\mathbb{N}$. There are no asymptotic values of $f$: if $\gamma\rightarrow\infty$ is a curve, then $f(\gamma)$ is unbounded.  \end{proof}

\begin{rem}\label{zero_listing} The same proof as for Proposition \ref{critical_point_listing} shows that the zeros of $f$ are given by $0$ (multiplicity $M_1$) and the simple zeros whose formulae is given by replacing $l_j$ and $l_j'$ in the expressions in (\ref{final_crit_pts_listing}) by $l_j+1/2$, $l_j'+1/2$, respectively.
\end{rem}

\noindent Theorem \ref{mainthm} now follows:

\vspace{3mm}

\noindent \emph{Proof of Theorem \ref{mainthm}:} We take $f:=h\circ\phi^{-1}$ as in Theorem \ref{TWB_application}. The conclusions of Theorem \ref{mainthm} are then included in the statements of Theorem \ref{TWB_application} and Proposition \ref{critical_point_listing}. 

\vspace{3mm}

The applications of the present manuscript of which the authors are aware are in regards to entire functions with an essential singularity at $\infty$, and so it is natural to impose the assumption $r_j\rightarrow\infty$ in Definition \ref{permissible} of ``permissibility''. However, this is inessential to most of our arguments. Indeed, if we assume instead that $r_j\rightarrow r_\infty<\infty$, we have the following:



\begin{thmx}\label{mainthm2} Let $c\in\mathbb{C}^\star$, $0<r_\infty<\infty$, $(M_j)_{j=1}^\infty \in \mathbb{N}$ be increasing, $(r_j)_{j=1}^\infty \in \mathbb{R}^+$ such that \begin{gather} r_{j+1} \geq  \exp\left(\pi\big/M_j\right) \cdot r_j \textrm{ for all } j\in\mathbb{N}\textrm{, } r_j\xrightarrow{j\rightarrow\infty}r_\infty\textrm{, and } \sup_j\frac{M_{j+1}}{M_j}<\infty.  \end{gather} Set $r_0:=0$ and
\begin{equation}  c_1:=c\textrm{, and } c_j:=c_{j-1}\cdot r_{j-1}^{M_{j-1}-M_{j}} \textrm{ for } j\geq2.   \end{equation}  Then there exists a holomorphic function $f: r_\infty\mathbb{D} \rightarrow\mathbb{C}$ and a $K$-quasiconformal homeomorphism $\phi: r_\infty\mathbb{D} \rightarrow r_\infty\mathbb{D}$ with $K$ depending only on $\sup_j(M_{j+1}/M_j)$ such that: \begin{gather}\label{theoremBfirstformula} f\circ\phi(z)=c_jz^{M_j} \textrm{ for } r_{j-1}\cdot\exp(\pi/M_{j-1}) \leq |z| \leq r_j\textrm{, } j\in\mathbb{N}. \end{gather} Moreover, the only singular values of $f$ are the critical values $(\pm c_jr_j^{M_j})_{j=1}^\infty$.
\end{thmx}

\begin{proof} We define $h$ exactly as in (\ref{h_formula}), so that the domain of $h$ is now $r_\infty\mathbb{D}$ (rather than $\mathbb{C}$ as in Theorem \ref{mainthm}). The same argument given in Proposition \ref{quasiregularity} proves that $h$ is $K$-quasiregular, with $K$ depending only on $\sup_j(M_{j+1}/M_j)$. By applying the Measurable Riemann Mapping theorem to the Beltrami coefficient $h_{\overline{z}}/h_z$ (defined on $r_\infty\mathbb{D}$), we have a $K$-quasiconformal homeomorphism $\phi: r_\infty\mathbb{D} \rightarrow r_\infty\mathbb{D}$ such that $f:=h\circ\phi^{-1}$ is holomorphic on $r_\infty\mathbb{D}$. The formula (\ref{theoremBfirstformula}) now follows from the definitions of $f$ and $h$, and the statement about the singular values of $f$ follows exactly as in the proof of Proposition \ref{critical_point_listing}. \end{proof}

Missing from the statement of Theorem \ref{mainthm2} is an application of criteria for conformality at a point (in Theorem \ref{mainthm} the criteria is $\sum_j M_j^{-1}<\infty$, and the point is $\infty$). Although we do not record them here, one can produce analogous criteria in Theorem \ref{mainthm2} for conformality of $\phi$ at points on $|z|=r_\infty$. Indeed, we may extend $\phi$ to a self-map of $\hat{\mathbb{C}}$ by the Schwarz reflection principle, and apply Theorem \ref{TWB} at a point lying on $|z|=r_\infty$.



\section{A Dynamical Application}
\label{dynamical_application_section}

In this Section, we briefly discuss some dynamical applications of Theorem \ref{mainthm}. We will discuss an approach to constructing entire functions with multiply-connected wandering domains using Theorem \ref{mainthm}. But first we will show how, in certain applications, we can conclude much more than conformality at $\infty$ of the map $\phi$ in Theorem \ref{mainthm}. Namely we can conclude a uniform estimate $||\phi(z)/z-1||_{L^\infty(\mathbb{C})}<\varepsilon$ in the following situation: given a function $f=h\circ\phi^{-1}$ as in Theorem \ref{mainthm}, if we define $h_n$ by replacing $h$ in $|z|\leq r_n$ with $z\mapsto c_nz^{M_n}$, we obtain a sequence of entire functions $f_n=h_n\circ\phi_n^{-1}$ with $f_n\approx f$ in $|z|\geq r_n$. As $n\rightarrow\infty$, the maps $\phi_n$ converge uniformly (in the spherical metric on $\hat{\mathbb{C}}$) to the identity so that $||\phi_n(z)/z-1||_{L^\infty(\mathbb{C})}<\varepsilon$ for large $n$. This argument is very useful in dynamical applications: it will be used in the companion paper \cite{BL_inprep}, and a related argument is used in \cite{MR3316755}, \cite{MR4041106}, \cite{2019arXiv190410086L}. We formalize the above discussion below.


\begin{definition}\label{entire_family} Let \begin{align}\label{choice_of_pars} (M_j)_{j=1}^\infty, (r_j)_{j=1}^\infty \textrm{ be strongly permissible, and } c\in\mathbb{C}^\star. \end{align} Denote by $f=h\circ\phi^{-1}$ the entire function obtained by applying Theorem \ref{mainthm} to (\ref{choice_of_pars}), where $h$ is quasiregular and $\phi$ quasiconformal. We define a sequence $(f_n)_{n=0}^\infty$ of entire functions as follows. Let $h_0:=h$ and for $n\geq1$:  \[ h_n(z)=\begin{cases} 
      c_nz^{M_n} & \textrm{ if } |z|\leq r_n \\
      h(z) & \textrm{ if } |z|\geq r_n.
   \end{cases}
\] Define $f_n:=h_n\circ\phi_n^{-1}$, where $\phi_n: \mathbb{C}\rightarrow\mathbb{C}$ is the unique quasiconformal mapping such that: \begin{enumerate} \item $f_n$ is holomorphic, \item $\phi_n(0)=0$, and \item $|\phi_n(z)/z - 1|\rightarrow0$ as $z\rightarrow\infty$.  \end{enumerate}
\end{definition}

\begin{rem}\label{normalization_remark} That we may normalize $\phi_n$ in Definition \ref{entire_family} so as to satisfy (3) requires justification. To this end, let \begin{equation}\label{I_n} I_n(r):=\frac{1}{2\pi}\int_{|z|<r} \frac{D_n(z)-1}{|z|^2}\textrm{d}A(z)\textrm{ for } r>0, \end{equation} where $D_n$ is the dilatation quotient (see Definition \ref{dilatation_definition}) of $\psi_n(z):=1/\phi_n(1/z)$. Then \begin{equation}\label{D_n_relation} D_n(z)=0 \textrm{ if } |z|^{-1} \leq r_n\textrm{, and } D_n(z)\stackrel{a.e.}{=} D_0(z) \textrm{ if } |z|^{-1}\geq r_n. \end{equation} In particular, $I_n(r)\leq I_0(r)$. Thus by Theorem \ref{TWB}, $\psi_n$ is conformal at $0$, and hence $\phi_n$ is conformal at $\infty$. Thus we may normalize $\phi_n$ as claimed.
\end{rem}

\begin{thm}\label{limiting_proc} Let $(M_j)_{j=1}^\infty, (r_j)_{j=1}^\infty$ be strongly permissible, $c\in\mathbb{C}^\star$, and notation as in Definition \ref{entire_family}. Then for any $\varepsilon>0$, there exists $N_{\varepsilon}\in\mathbb{N}$ such that for $n\geq N_{\varepsilon}$, \begin{align} ||\phi_n(z)/z-1||_{L^{\infty}(\mathbb{C})} < \varepsilon.\end{align}
\end{thm}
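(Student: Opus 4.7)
\emph{Proof proposal.} The plan is to pass to the inverted maps $\psi_n(w) := 1/\phi_n(1/w)$ from Remark \ref{normalization_remark}, whose Beltrami coefficients $\mu_{\psi_n}$ are supported in the shrinking disks $\{|w| \leq 1/r_n\}$ and agree there with $\mu_{\psi_0}$. A direct computation shows that $\phi_n(z)/z - 1 = (w - \psi_n(w))/\psi_n(w)$ with $w = 1/z$, so modulo an elementary estimate relating the two sides it suffices to show that $\|\psi_n(w)/w - 1\|_{L^\infty(\mathbb{C})} \to 0$, where $\psi_n(0) = 0$ and $\psi_n'(0) = 1$ by the normalization stipulated in Definition \ref{entire_family}. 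Given $\varepsilon > 0$, I would split $\mathbb{C}$ into three regions $\{|w|\leq\delta\}$, $\{\delta\leq|w|\leq R\}$, $\{|w|\geq R\}$ and control $|\psi_n(w)/w - 1|$ in each by a different tool.

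On $\{|w| \leq \delta\}$: apply Theorem \ref{TWB} to $\psi_n$. By (\ref{D_n_relation}), $D_n \leq D_0$ pointwise, so $I_n(r) \leq I_0(r)$ for every $r$, and $I_0 < \infty$ by the estimate (\ref{calculation}) in the proof of Theorem \ref{TWB_application} (which is exactly where strong permissibility $\sum M_j^{-1}<\infty$ is used). Hence the error functions $\varepsilon_n(\cdot)$ produced by (\ref{conclusion_of_TWB}) are dominated by a single $\varepsilon_* \to 0$ at $0$ depending only on $K$ and $I_0$; choose $\delta$ so that $\varepsilon_*(\delta) < \varepsilon/3$ to obtain $|\psi_n(w)/w - 1|<\varepsilon/3$ on the inner region, uniformly in $n$.

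On $\{\delta \leq |w| \leq R\}$: use the dependence-on-parameters version of the Measurable Riemann Mapping Theorem. The coefficients $\mu_{\psi_n}$ are uniformly bounded in $L^\infty$ and converge a.e. to zero (their supports shrink to $\{0\}$), so the normalized solutions $\psi_n$ converge uniformly on compact subsets of $\mathbb{C}$ to the identity, and $|\psi_n(w)/w - 1|<\varepsilon/3$ on this annulus for $n$ large. On $\{|w| \geq R\}$, taking $n$ so large that $1/r_n < R$: $\psi_n$ is holomorphic on $\{|w| > 1/r_n\}$, and since $\phi_n$ is holomorphic and univalent near $0$, $\psi_n$ extends meromorphically to $\infty$ with a simple pole. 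Consequently $\psi_n(w)/w - 1$ is holomorphic on the spherical disk $\{|w|\geq R\}\cup\{\infty\}\subset\hat{\mathbb{C}}$, and the maximum modulus principle bounds it there by its supremum on $\{|w|=R\}$, which is $< \varepsilon/3$ by the intermediate step.

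Combining the three estimates yields $\|\psi_n(w)/w - 1\|_{L^\infty(\mathbb{C})} < \varepsilon$ for $n$ large, which transfers to the desired $L^\infty$ bound on $\phi_n(z)/z - 1$. The key technical point I expect to require care is the quantitative form of Theorem \ref{TWB}: one needs the error function to be dominated by a single expression depending only on $K$ and an upper bound for $I$, so that the inequality $I_n \leq I_0$ delivers one common rate $\varepsilon_*$ valid for every $n$; this is the step where strong permissibility is essential. The remaining ingredients (MRMT convergence on compacts, which follows from the three-point-normalization version combined with the observation that the derivative-at-$0$ scale factor relating the two normalizations tends to $1$ by Cauchy's formula, and the maximum modulus argument) should be routine.
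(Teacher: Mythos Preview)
Your argument is correct and tracks the paper's closely: both use Theorem~\ref{TWB} for uniform control near $w=0$ (equivalently $z=\infty$), exploiting that the error function depends only on $K$ and $I$ together with $I_n\le I_0$, and then MRMT convergence on compacts for the rest. The paper works directly with $\phi_n$, splitting into $\{|z|\ge R\}$ and $\{|z|\le R\}$ and invoking a K\"obe distortion estimate for $\phi_n'(0)$ to pass from $|\phi_n(z)-z|\to0$ to control of the ratio $\phi_n(z)/z$ near the origin; your maximum-modulus step on $\{|w|\ge R\}$ is a clean substitute, using that $\psi_n(w)/w-1$ extends holomorphically across $\infty$.

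One point to tighten: the Cauchy-formula justification for the scale factor $a_n$ relating $\psi_n$ to the three-point-normalized $\tilde\psi_n$ is misplaced as stated, since $\psi_n$ (and $\tilde\psi_n$) is \emph{not} holomorphic near $w=0$ --- the dilatation is supported precisely in $\{|w|\le 1/r_n\}$. You can instead pin down $a_n\to1$ by evaluating $\psi_n=a_n\tilde\psi_n$ at any fixed $w_0$ with $|w_0|=\delta$: your TWB step already gives $|\psi_n(w_0)/w_0-1|<\varepsilon_*(\delta)$ uniformly in $n$, while $\tilde\psi_n(w_0)\to w_0$, so $\limsup_n|a_n-1|\le\varepsilon_*(\delta)$ for every $\delta>0$. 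Alternatively, Cauchy's formula \emph{does} apply at the holomorphic end $w=\infty$ (equivalently $z=0$, where $\phi_n$ is genuinely analytic), which is closer in spirit to the paper's use of $\phi_n'(0)$.
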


\begin{proof} Briefly, this is a consequence of the error term in the conclusion (\ref{conclusion_of_TWB}) of Theorem \ref{TWB} depending only on $K$ and $I$, and not otherwise on the quasiconformal mapping under consideration. Let us explain further. We follow the notation of Remark \ref{normalization_remark}. For $I_n$ as in (\ref{I_n}), we have already noted that $I_n(r)\leq I_0(r)$. Moreover, $K(\psi_n)\leq K(\psi_0)$ by (\ref{D_n_relation}). Thus, by Theorem \ref{TWB}, there is a function $\iota: \mathbb{R}^{+} \rightarrow\mathbb{R}^{+}$ with  \begin{equation} \left| \frac{\psi_n(z)}{z} - 1 \right| < \iota(|z|) \end{equation} where $\iota(r)\rightarrow0$ as $r\rightarrow0$,  and $\iota$ does not depend on $n$. Thus, given $\varepsilon>0$, there exists $R>0$ such that: \begin{equation}\label{infinite_part} \left|\left| \frac{\phi_n(z)}{z}-1 \right|\right|_{L^\infty(|z|\geq R)} < \varepsilon \textrm{ for all } n\in\mathbb{N}. \end{equation} On the other hand, for all $z\in\mathbb{C}$ we have $D_n(z) \rightarrow 0$ as $n\rightarrow\infty$. Thus, by a standard argument, we have \begin{equation}\label{finite_part} ||\phi_n(z)- z||_{L^\infty(|z|\leq R)}\xrightarrow{n\rightarrow\infty} 0. \end{equation} The result now follows from (\ref{infinite_part}), (\ref{finite_part}), and a K\"obe distortion estimate for $\phi_n'(0)$. \end{proof}

We now turn to an application: we will show that as long as our parameters satisfy certain simple relations, the resulting entire function of Theorem \ref{mainthm} has a multiply connected wandering domain.

\begin{notation} For $0<\alpha<1$, we let $A_j^\alpha:=\{ z\in\mathbb{C} : \alpha^{-1} r_{j-1}\cdot\exp(\pi/M_{j-1})<|z|<\alpha r_j\}$.
\end{notation}

\begin{thm}\label{wandering_domain_application} Suppose $(M_j)_{j=1}^\infty$, $(r_j)_{j=1}^\infty$ are strongly permissible, $c\in\mathbb{C}^\star$, and \begin{equation}\label{sufficient} r_{j+1}=c_jr_j^{M_j}. \end{equation} Then, for any $0<\alpha<1$, there exists $N_\alpha$ such that, for $j\geq N_\alpha$, $A_j^\alpha$ is contained in a multiply-connected wandering domain for $f$ as in Theorem \ref{mainthm}.
\end{thm}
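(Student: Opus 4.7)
The plan is to show that, for $j$ sufficiently large (depending on $\alpha$), $A_j^\alpha$ lies in a multiply connected Fatou component $U$ of $f$, from which Baker's classical theorem---that every multiply connected Fatou component of a transcendental entire function is a bounded wandering domain---yields the conclusion.

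The first task is to prove the orbital inclusion $f^k(A_j^\alpha)\subset A_{j+k}^\alpha$ for all $k\geq 0$, which by induction reduces to the one-step inclusion $f(A_j^\alpha)\subset A_{j+1}^\alpha$. Since $f\circ\phi=h$, one has $f(A_j^\alpha)=h(\phi^{-1}(A_j^\alpha))$, and Theorem \ref{TWB_application} gives $\phi^{-1}(z)/z\to 1$ as $z\to\infty$, so that for any prescribed $\delta>0$ and all $j$ large,
\[
\phi^{-1}(A_j^\alpha)\subset \bigl\{(1-\delta)\alpha^{-1}r_{j-1}e^{\pi/M_{j-1}}\leq|z|\leq(1+\delta)\alpha r_j\bigr\}.
\]
Choosing $\delta$ small enough that $(1+\delta)\alpha<1$, this annulus lies strictly inside the monomial annulus of Definition \ref{general_powers_def}, where $h(z)=c_jz^{M_j}$. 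Using the hypothesis $r_{j+1}=c_jr_j^{M_j}$ (whose shifted form gives $c_jr_{j-1}^{M_j}=r_j$), the image $h(\phi^{-1}(A_j^\alpha))$ has modulus at most $(\alpha(1+\delta))^{M_j}r_{j+1}$ and at least $((1-\delta)\alpha^{-1})^{M_j}e^{M_j\pi/M_{j-1}}r_j$. A direct computation shows that for $j$ large the upper bound is at most $\alpha r_{j+1}$ (since $\alpha(1+\delta)<1$ and $M_j\to\infty$), while the lower bound is at least $\alpha^{-1}r_je^{\pi/M_j}$ (since $(1-\delta)\alpha^{-1}>1$ and $M_j/M_{j-1}\geq 1$); thus $f(A_j^\alpha)\subset A_{j+1}^\alpha$. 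Iteration presents no new difficulty, since the required decay of $\phi^{-1}(z)/z-1$ is only easier at the larger scale $|z|\sim r_{j+k-1}$.

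The inner estimate yields $|f^k(z)|\geq\alpha^{-1}r_{j+k-1}e^{\pi/M_{j+k-1}}\to\infty$ uniformly in $z\in A_j^\alpha$ as $k\to\infty$, so $\{f^k|_{A_j^\alpha}\}$ converges uniformly in the spherical metric to the constant $\infty$ and is in particular a normal family, placing $A_j^\alpha$ in the Fatou set of $f$. Let $U$ be the Fatou component containing $A_j^\alpha$. The annulus $A_j^\alpha$ contains a loop $\gamma$ of winding number $1$ about $0$. If $\gamma$ were contractible in $U$, then $0\in U$; but $0$ is a zero of $f$ by Remark \ref{zero_listing}, so $f^k(0)=0$ for all $k$, contradicting the fact that normality on $U$ forces $f^k\to\infty$ locally uniformly on $U$. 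Hence $U$ is multiply connected, and Baker's theorem then identifies $U$ as a bounded wandering domain of $f$ containing $A_j^\alpha$.

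The principal obstacle is the orbital inclusion: the perturbation factor $\phi^{-1}(z)/z-1$ is raised to the large power $M_j$, so one needs $\delta$ below a threshold $\delta_0(\alpha)>0$ (for instance any $\delta<1/\alpha-1$) depending only on $\alpha$, rather than merely $\delta<1$. Since Theorem \ref{TWB_application} provides $\delta\to 0$ qualitatively, $N_\alpha$ can simply be chosen large enough that $\delta<\delta_0(\alpha)$ at scale $|z|\sim r_{N_\alpha}$; no quantitative rate on the convergence $\phi^{-1}(z)/z\to 1$ is needed.
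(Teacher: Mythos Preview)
Your argument is correct and follows essentially the same route as the paper: both establish the one-step inclusion $f(A_j^\alpha)\subset A_{j+1}^\alpha$ via conformality of $\phi$ at $\infty$ together with the monomial formula $h(z)=c_jz^{M_j}$ and the hypothesis $c_jr_j^{M_j}=r_{j+1}$, deduce that $f^k\to\infty$ uniformly on $A_j^\alpha$ (hence Fatou), and then use the fixed point at $0$ to force multiple connectivity and invoke Baker's theorem. Two cosmetic differences: you work directly with $\phi^{-1}$ and an explicit parameter $\delta$, whereas the paper uses the intermediate annulus $A_j^{\sqrt{\alpha}}$ and appeals more tersely to ``expansivity of $h$''; and you quote Baker's result in the form ``multiply connected $\Rightarrow$ wandering'', while the paper quotes ``multiply connected $\Rightarrow$ bounded'' and argues by contradiction that the $\Omega_j$ are pairwise distinct. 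One small slip: your parenthetical threshold ``$\delta<1/\alpha-1$'' should be ``$\delta<1-\alpha$'', since you also need $(1-\delta)\alpha^{-1}>1$ for the lower-bound estimate; this does not affect the argument.
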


\begin{rem} The condition (\ref{sufficient}) is sufficient to guarantee a wandering domain for the function $f$ of Theorem \ref{mainthm}, but is far from necessary.
\end{rem}

\begin{proof} As usual, we denote $f=h\circ\phi^{-1}$. By Theorem \ref{mainthm}, there exists $N=N_\alpha\in\mathbb{N}$ such that \begin{equation}\label{phi_id} |\phi(z)/z-1| < |\sqrt{\alpha}-1| \textrm{ for } |z|\geq r_N. \end{equation} It follows that \begin{equation} \phi\left( A_j^\alpha \right) \subset A_j^{\sqrt{\alpha}} \textrm{ for } j\geq N. \end{equation} By (\ref{sufficient}), we have $h(r_j)=r_{j+1}$ for all $j$. Together with expansivity of $h$, this implies: \begin{equation} f\left( A_j^{\alpha} \right) \subset A_{j+1}^{\alpha} \textrm{ for } j\geq N,\end{equation} perhaps after increasing $N$. Thus the iterates of $f$ converge uniformly to $\infty$ on $A_j^{\alpha}$ for $j\geq N$, and so each such $A_j^{\alpha}$ is contained in a Fatou component for $f$, which we will call $\Omega_j$. 

If we suppose by way of contradiction that $\Omega_j=\Omega_k$ for some $j\not=k$, then $\Omega_j=\Omega_{j+1}$, and this would imply that $\Omega_{j}$ is unbounded. But $\Omega_j$ is multiply connected since $f(0)=0$ is an attracting fixed point, and this is a contradiction since all multiply connected Fatou components of $f$ must be bounded by \cite[Theorem~1]{MR0402044}. Thus, we may conclude that the $(\Omega_j)_{j=N}^\infty$ form a distinct sequence of Fatou components. Since $f(\Omega_j)\subset\Omega_{j+1}$, it follows that each such $\Omega_j$ is a wandering component for $f$. \end{proof}



\begin{rem} Let notation be as in Theorem \ref{wandering_domain_application}, and consider the family $f_n$ of Definition \ref{entire_family}. It is not difficult to show using Theorem \ref{limiting_proc} that if $0<\alpha<1$, then for sufficiently large $n$ we have that \emph{each} $A_j^{\alpha}$ for $j\in\mathbb{N}$ is contained in a wandering component $\Omega_j$ of $f_n$. 
\end{rem}

\end{document}